\documentclass[preprint,11pt]{amsart}
 \usepackage{amssymb}
\usepackage{amsmath,amsfonts,amscd}
\usepackage{mathrsfs,hyperref, enumerate}

\usepackage[a4paper,bindingoffset=0.2in,left=1in,right=1in,top=1in,bottom=1in,footskip=.25in]{geometry}

\usepackage{amsmath,amsfonts,amsthm,amssymb,amscd}
\usepackage{bbm}
\usepackage{mathrsfs}
\usepackage{color}
\usepackage{hyperref}
\usepackage{tikz-cd}
\usepackage{braket}

\usetikzlibrary{matrix}
\usepackage{graphicx}
\usepackage{enumerate}
\usepackage{changepage}

\def\C{\mathbb{C}}
\def\F{\mathbb{F}}
\def\P{\mathbb{P}}

\newcommand{\CalC}{\mathcal{C}}
\newcommand{\CalF}{\mathcal{F}}

\theoremstyle{definition}
\newtheorem{thm}{Theorem}
\numberwithin{thm}{section}

\newtheorem{prop}[thm]{Proposition}
\newtheorem{lem}[thm]{Lemma}
\newtheorem{remark}{Remark}

\newcommand{\gam}{\gamma}

\newcommand{\End}{\operatorname{End}}

\newcommand{\abs}[1]{\left| #1 \right|}

\newcommand{\tit}[1]{\textit{#1}}

\newcommand{\trm}[1]{\textrm{#1}}
\newcommand{\al}{\alpha}

\def\F{\mathbb{F}}

\newcommand{\lam}{\lambda}

\newcommand{\tr}{\textrm{Tr}}

\newcommand{\sub}{\subset}

\newcommand{\Z}{\mathbb{Z}}
\begin{document}

\begin{abstract}
Let $n,k$ be  positive integers and $q$ be a prime power.  We prove that there exists a real constant $C_k$ depending only on $k$ such that if  $ \# M_n(\F_q) > C_k$ then every element in $M_n(\F_q)$ is a sum of \tit{two} $k$th powers, proving Larsen's conjecture.
\end{abstract}
\keywords{Waring problem, Gauss Sums, Simple extensions}

\title{Matrix Waring Problem-III}

\author{Krishna Kishore and Sunil Kumar Mallick}

\email{kishorekrishna@iittp.ac.in}
\address{Krishna Kishore,
Department of Mathematics and Statistics,
Indian Institute of Technology Tirupati,
Tirupati 517619, Andhra Pradesh,
India.}
\keywords{Additive decompositions, Waring problem}

\email{mallicksunil200114@gmail.com}
\address{Sunil Kumar Mallick,
Department of Mathematics and Statistics,
Indian Institute of Technology Tirupati,
Tirupati 517619, Andhra Pradesh,
India.}
\maketitle

\section{Introduction}

The classical Waring problem states that every natural number is a sum of $4$ squares, of $9$ cubes, of $19$ biquadrates, etc.  In other words, for all integers $k \geq 1$, if $g(k)$ is smallest positive integer such that every natural number is a sum of at most $g(k)$ $k$th powers (of natural numbers), then, in particular, $g(2) = 4$,  $g(3) = 9$, $g(4) = 19$. The existence of $g(k)$ for arbitrary $k$ was proved by Hilbert, and later on explicit formulae for $g(k)$ have been found for all but finitely many values of $k$. In this context, one may ask whether there exists a smallest bound $G(k)$ such that every \tit{sufficiently large} natural number is a sum of at most $G(k)$ $k$th powers. Apart from the trivial case $k = 1$, the exact values of $G(k)$ are known only for $k = 2$ and $k = 4$; for a survey of results about Waring's problem and its variants see \cite{VW}.

Modern versions consider Waring's problem over non-commutative structures. For example, Shalev \cite{Sh} proved that, for every fixed nontrivial group word $w$, every element of a sufficiently large finite nonabelian simple group is a product of three values of $w$. Larsen, Shalev and Tiep \cite{LST} subsequently reduced the number of factors to two.  Waring’s problem makes sense over arbitrary rings, and in particular for matrix rings
over finite fields. Let $k$ and $n$ be positive integers and let $G'(k,n)$ be the smallest positive integer such that for all \tit{suﬃciently large $q$} every matrix in $M_n(\F_q)$ can be written as a sum of $G'(k,n)$ kth powers (of matrices in $M_n(\F_q)$). It is well-known, due
to Small \cite{Sm}, that $G'(k, 1) = 2$ for all $k \geq 2$. In \cite{Ki}, the first author showed that for all $k \geq 2$, $G'(k,1) = G'(k,2) =2$ and $G'(k,n) \leq 3$ for all $n \geq 3$.
In \cite{KS}, the first author and Singh improved this result further, and proved the following \tit{weaker} version of Larsen's conjecture:
\begin{quote}
Let $n,k$ be positive integers and $q$ be a prime power. There exists a real constant $C_k$ depending only on $k$, such that if  $q > C_k$ then every element in $M_n(\F_q)$ is a sum of two $k$th powers.
\end{quote}
Thus, $G'(k,n) = 2$ for all $k \geq 2$ with the threshold for the field size depending only on $k$. Recently, Braez \cite{Br} improved on this result and proved that, for $q \neq 2$, every matrix in $M_n(\F_q)$ is a sum of two $k$th powers whenever $q^n > (k-1)^4$. For $q = 2$ and odd $k$, the same conclusion holds in every dimension $n$; see \cite[Theorem $5.1$]{KVZ}: the argument also treats even exponents over $\F_2$ in sufficiently large dimensions and gives a uniform bound on the cardinality of the matrix ring.

However, it is evident that  above quoted result is not analogous to the group-theoretic result \cite{LST} mentioned above. Indeed, the result on finite simple groups establishes a bound on the \tit{order of the group}, but in the additive case, we established analogous result with the constraint on the field size $q$ and not on the \tit{order of matrix ring} $M_n(\F_q)$.  This article improves on all the results discussed above and proves the following (stronger) conjecture of Larsen: 

\begin{quote}
Let $n , k$ be positive integers and $q$ be a prime power. There exists a real constant $C_k$ depending only on $k$, such that if  $\# M_n(\F_q) > C_k$, then every element in $M_n(\F_q)$ is a sum of two $k$th powers.
\end{quote} 

\noindent More precisely,
\begin{thm}\label{mainthm}
For $k \geq 1$, set 
$$
D_k = (4k)^8, \qquad L_k = \lceil\log_2 D_k \rceil, \qquad C_k = D_k^{L_k}.
$$
If $\#M_n(\F_q) > C_k$ then every matrix in $M_n(\F_q)$ is a sum of two $k$th powers.
\end{thm}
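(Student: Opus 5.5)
We will reduce to known results by splitting according to the size of $q$ relative to $D_k$. Since $\#M_n(\F_q) = q^{n^2}$, the hypothesis $\#M_n(\F_q) > C_k = D_k^{L_k}$ means $n^2\log_2 q > L_k \log_2 D_k$.

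\emph{Case 1: $q > D_k$.} Here we invoke the earlier result of \cite{KS} (or \cite{Br}) that every matrix over $\F_q$ is a sum of two $k$th powers once $q$ exceeds a threshold depending only on $k$. To make the constant explicit we redo the Gauss--sum/Weil estimate. The key fact is that every element of $\F_{q^m}$ is a sum of two $k$th powers as soon as $q^m > (k-1)^4$. We then apply the decomposition of $M_n(\F_q)$ via rational canonical form: every matrix is similar to a block form built from companion matrices. Each block is either realized as $A = X + Y$ with $X,Y$ polynomials in a single cyclic matrix (so that they live in a field or a local algebra $\F_q[t]/(f)$), or is handled by the split-block trick of \cite{Ki}: write a cyclic matrix as a sum of a triangular matrix with prescribed diagonal $k$th powers and a nilpotent-adjusted part. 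This gives the claim whenever the relevant fields $\F_{q^d}$ satisfy $q^d > (k-1)^4$, which holds since $q > D_k \ge (4k)^8$.

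\emph{Case 2: $q \le D_k$.} Then $\log_2 q \le \log_2 D_k \le L_k$, so $n^2 > L_k \log_2 D_k/\log_2 q \ge \log_2 D_k$ (using $\log_2 q\le L_k$). Thus $q^{n} \ge 2^{n}$ is large, and we want to force every building block to live over an extension $\F_{q^d}$ with $q^d > D_k$. The plan is to prove a structural lemma: every $A\in M_n(\F_q)$ with $n$ large is similar to $B + N$ where $B$ is block diagonal with blocks that are companion matrices of irreducible polynomials of degree $d \ge L_k$ (hence $q^d \ge 2^{L_k} \ge D_k$), and $N$ is strictly block upper triangular. One can arrange this by choosing, via a simple-extension / cyclic vector argument, a decomposition $A = S + T$ where $S$ and $T$ are each cyclic with irreducible characteristic polynomial factors of large degree (for instance $S$ regular semisimple with eigenvalues in $\F_{q^d}$, $d \ge L_k$, $d\mid$ block sizes). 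Then on each large-degree block the field Waring statement ($q^d>(k-1)^4$) writes each diagonal block as a sum of two $k$th powers in $\F_{q^d}$, and the off-diagonal nilpotent part is absorbed by a Hensel-type lifting: upper block triangular matrices with invertible $k$th-root-friendly diagonal blocks have $k$th powers surjecting onto the unipotent correction when diagonal entries are chosen with distinct eigenvalues (coprime-to-$p$ issues handled by choosing nonzero summands).

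The main obstacle is Case 2, especially small $n$ residues: when $n$ is not divisible nicely by a degree $d\ge L_k$, we must partition $n = d_1+\dots+d_r$ with all $d_i \ge L_k$ (possible once $n \ge 2L_k$, which follows from $n^2 > L_k\log_2 D_k\ge L_k^2\cdot$const) and control the characteristic $p\mid k$ case, where $k$th powers of nilpotent parts degenerate; there we would write $k = p^a k'$, use that Frobenius is bijective on fields, and first try to handle only $k'$ with the nilpotent part placed entirely in one summand chosen semisimple.
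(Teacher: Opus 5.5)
There is a genuine gap: neither case actually reaches the statement.

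First, the arithmetic. The constant $C_k$ is built so that the hypothesis gives exactly $q^n > D_k$. Indeed, if $q^n \le D_k$ then $2^n \le D_k$, so $n \le L_k$ and $q^{n^2}=(q^n)^n \le D_k^{L_k}=C_k$. You only extract $n^2>\log_2 D_k$ and then assert $n\ge 2L_k$, which is false. Since $D_k\le 2^{L_k}$ we have $C_k\le 2^{L_k^2}$, so $q=2$, $n=L_k+1$ satisfies the hypothesis. Likewise, with $q=2^{L_k-1}<D_k$ the hypothesis holds as soon as $(L_k-1)n^2>L_k^2$, i.e. for $n$ of order $\sqrt{L_k}$. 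So a partition into blocks of size $\ge L_k$ is not available. In any case the right target is the full degree $n$, where $q^n>D_k$ is precisely what you have.

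Second, your structural lemma is false. Similarity preserves the characteristic polynomial, and $B+N$ has characteristic polynomial a product of irreducibles of degree $\ge L_k\ge 16$. So no matrix with an eigenvalue in $\F_q$ (scalar, diagonal or nilpotent matrices, for instance) is similar to such a $B+N$. More generally, the diagonal blocks of a block triangularization of an arbitrary matrix are arbitrary matrices, not field elements, so the field Waring theorem cannot be applied to them. The same problem affects your Case 1 sketch, since rational canonical blocks may be $1\times 1$, and citing the Kishore--Singh result there does not produce the explicit threshold $D_k$. With the correct reduction, Braez's theorem ($q\ne 2$, $q^n>(k-1)^4$) would settle all $q>2$. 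The case your plan says nothing about is $q=2$ with $k$ even, which is exactly where the paper's new work is needed.

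What is missing is the mechanism the paper actually uses. For $n\ge 3$ and nonscalar $z$, one fixes two conjugacy classes consisting of $k$th powers:
\begin{itemize}
\item $X$, the class of the multiplication matrix $M_\alpha$, where $\alpha\in\F_{q^n}$ is a $k$th power of exact degree $n$ with $\tr(\alpha)=\tr(z)-\tr(\beta)$. Its existence is a Gauss-sum count (Lemma~\ref{mainlemma}), needing $q^{n/2-1}>3k$, which follows from $q^n>D_k$ and $n\ge 3$.
\item $Y$, the class of $\mathrm{diag}(0,M_\beta)$ with $\beta=u^k$, $u$ primitive in $\F_{q^{n-1}}$.
\end{itemize}
The core result (Theorem~\ref{x+ythm}) is that every nonscalar $z$ with $\tr(z)=\tr(X)+\tr(Y)$ lies in $X+Y$. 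Write $z$ in block form relative to a line that is not $z$-stable, with lower-left block $c\ne 0$ and lower-right block $A$. The cyclic-vector criterion then reduces the problem to finding $B$ with characteristic polynomial $h$ such that $c$ is cyclic for $A-B$. Such a $B$ is constructed by extending simple maps one vector at a time via Ram's extension lemmas, with a separate argument over $\F_2$ (Lemma~\ref{binaryfinalextension}) valid for $n\ge 5$. Scalars are handled through $\F_{q^n}\hookrightarrow M_n(\F_q)$ and the field result, and $n\le 2$ by citation.

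Your ``$A=S+T$ with $S,T$ cyclic with large irreducible factors'' gestures at this decomposition but gives no argument for it. It also ignores the trace constraint that any such decomposition must satisfy. You then propose to write each of $S,T$ as a sum of two $k$th powers, which yields four summands rather than two. The point is to choose the classes of $S$ and $T$ to consist of $k$th powers from the start. That choice also makes your concern about $p\mid k$ disappear, since the summands are $k$th powers by construction.
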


\noindent We prove the result in two steps.
\begin{enumerate}
  \item \label{step1}
  Let $n \geq 2$, and suppose that 
  $$
  q > 2, \quad \trm{ or} \quad n =2,\quad \trm{ or} \quad n \geq 5.
  $$ Let $X$ be a conjugacy class whose characteristic polynomial is  irreducible  of degree $n$, and let $Y$ be a conjugacy class whose  characteristic polynomial is of the form $(t-\lam) h(t)$, where $\lam \in \F_q$, $h$ is irreducible of degree $n-1$ and $h(\lam) \neq 0$. Then, every \tit{nonscalar} $z$ satisfying $\tr(z) = \tr(X) + \tr(Y)$ belongs to $X + Y$.
  \item \label{step2}
  For $n \geq 3$, and $q^n > D_k$, we then construct classes $X$ and $Y$ as in $(1)$ consisting of $k$th powers. Scalar matrices and dimensions $1$ and $2$ are treated separately.
\end{enumerate}

Some comments regarding the existence of the above result in the literature are in order. At the end of writing this article, we learned that, very recently, Braez \cite{Br} proved an \tit{almost} similar result. So, let us observe some similarities and differences in our approach with that of Braez, and also how some of our results are relatively \tit{stronger}. 

Braez \cite{Br} combines Frobenius canonical form with completion results for unreduced upper Hessenberg matrices to construct, over $\F_q$ with $q > 2$ (and so $q \neq 2$), additive decomposition whose summands $x$ and $y$ have prescribed characteristic polynomials $f(t)$ and $(t- \lambda) h(t)$ as above and satisfying  the trace condition $\tr(z) = \tr(x) + \tr(y)$. Then he uses finite-field estimates to choose irreducible $f$ and $h$ so that both summands $x$ and $y$ are $k$th powers. Scalar matrices are handled separately through the embedding $\F_{q^n} \hookrightarrow M_n(\F_q)$.
 
In this article, we present two approaches to additive decompositions of nonscalar matrices into elements of prescribed conjugacy classes, subject to the necessary trace condition. Both use the cyclic-vector characterization established in \cite{KM}, which differs from Braez's approach through Frobenius canonical form and unreduced Hessenberg matrices. We now describe the two approaches to prove step $(1)$ above, and it will become evident that our approach is distinct from that of Braez.

Let $X$ and $Y$ be conjugacy class as in step $(1)$ above.  Every $y \in Y$ has a unique $\lambda$-eigenline. For nonscalar $z$ with matching trace, define 
$$
\pi_z : \Set{(x,y) \in X \times Y : x + y = z} \longrightarrow \P(V), \qquad (x,y) \mapsto \ker(y - \lam I)
$$
where the target is the space of one-dimensional subspaces of $V = \F_q^n$. Thus $\pi_z^{-1}(L)$ consists of decompositions in which the second summand has $\lambda$-eigenline $L$. As it turns out, fibers over $L$ are empty precisely when $L$ is $z$-invariant; see the beginning of \S \ref{adddecomp}.  For $q > 2$, the first proof provides an existence theorem and also offers a \tit{quantitative result}. By counting operators for which a specified vector fails to be cyclic, we obtain an explicit positive lower bound: for every non $z$-stabe line $L$ (which are \tit{plenty} in number for a given \tit{nonscalar} $z$), 
$$
\pi_z^{-1}(L) \geq \frac{1}{9} q^{(n-1)(n-2)}.
$$
See the end of proof of Proposition \ref{x+yprop}. Summing these bounds gives a lower bound on the total number of decompositions. But the first proof does not establish positivity in the case $q = 2$, so we need to approach differently in order deal with this exceptional case. Given a fixed non-$z$-invariant line $L$, choose a complement $W$ of $L$ in $V$, and put $m : = n-1$, and write $z = \begin{pmatrix} a & b \\ c & A \end{pmatrix}$ relative to the decomposition $V = L \oplus W$. Since $L$ is not $z$-invariant, $c \neq 0$. We use our results in \cite{KM}, wherein the characterization reduces the problem to finding an operator $B$ on $W$ with irreducible characteristic polynomial $h$ such that $c$ is cyclic for $A-B$. Once such a $B$ is found, the remaining row of $y$ is uniquely determined and furthermore $x = z-y$ belongs to $X$; see \cite{KM}. 

To construct $B \in \End_{\F_q}(W)$, we choose a basis $(v_0 = c, v_1, \ldots, v_{m-1})$ recursively so that $S(v_i) := Av_i - v_{i + 1}$, for all $0 \leq i \leq m-2$ defines a simple map on the hyperplane $H = \textrm{span}(v_0, \ldots, v_{m-2})$. Here simplicity means that no nonzero subspace of $H$ is invariant under $S$. An extension theorem of Ram \cite{Ra} for simple maps then gives a unique extension $B$ with characteristic polynomial $h$. Consequently, $((A-B)^j c = v_j)$ for $0 \leq j \leq m-1$, proving the required cyclicity, as desired. 

For $q > 2$, the recursive construction above follows by excluding choices that either don't yield simplicity or make the chosen vectors (at the inductive step) dependent. Over $\F_2$, this counting argument may leave no available choice at the final step. So we have to proceed differently in this exceptional case. In this case, we retain a suitable vector outside the span at each step, ensuring that the partial map already sends some vector outside the required hyperplane $H$. A geometric extension lemma shows that the images giving simple extensions cannot all lie in a single affine translate of $H$. This allows the final extension to preserve simplicity while placing the last basis vector outside $H$. Together with the immediate case $m =1$, this proves the decomposition theorem for $q > 2$, and for $q = 2$ when $n =2$ or $n \geq 5$.

Finally, combined with the usual finite-field trace estimates, this establishes the matrix Waring theorem for all finite fields, including \tit{even exponents} over $\F_2$ in sufficiently large dimensions; compare with our \cite[Theorem $5.1$]{KVZ} where the same is proved for odd $k$ in case $q = 2$. The paper therefore contributes quantitative decomposition bounds and a construction that is distinct from Braez's construction; furthermore this article resolves the exceptional $\F_2$ case also required to complete Larsen's conjecture.

The paper is organized as follows. In \S \ref{adddecomp}, we use our results in \cite{KM} and prove step $(1)$ above. In \S \ref{addchar}, we use standard results from character theory and prove step $(2)$ above. In \S \ref{mainthmsec}, we prove Theorem \ref{mainthm}.

\section{Additive decompositions in $M_n(\F_q)$}\label{adddecomp}

In this section, we use our results in \cite{KM}. First, let us recall the necessary setup. Let $q$ be a prime power, and $F := \F_q$ be the finite field with $q$ elements. Let $n \geq 2$ and $m := n-1$. Let $V := F^n$ be the $n$-dimensional vector space over $F$, and $\P(V)$ be the associated projective space. Let $X$ be a conjugacy class in $M_n(F)$ whose characteristic polynomial is irreducible of degree $n$ and let $Y$ be a conjugacy class in $M_n(F)$ whose characteristic polynomial has the form $(t- \lambda) h(t)$, where $\lambda \in F$ and $h$ is irreducible of degree $m$ and $h(\lambda) \neq 0$. 


The conjugacy class $Y$ can be partitioned into subsets $S_{L}$ parametrized by $L \in \P(V)$, where $S_{L}$ consists of elements $y \in Y$ which have the (unique) $\lambda$-eigenline $L$. For $y \in Y$, consider one such associated set $S_{L_y}$, $L_y \in \P(V)$. As it turns out \cite[Lemma 4.1]{KM}, after fixing a decomposition $V  = L_y \oplus W$, the set $S_{L_y}$ can be parametrized by $\CalC_h \times W^*$, where 
$$
\CalC_h = \Set{B \in \End_{F}(W) : \trm{ characteristic polynomial of B is } h(T)}.
$$
Since $h$ is irreducible of degree $m$, it follows that \cite[Lemma $4.1$]{KM} $\CalC_h$ is a single $GL(W)$-conjugacy class in $\End_{F}(W)$. Furthermore, every element of $S_{L_y}$ is of the form $\begin{pmatrix} \lam & r  \\ 0 & B \end{pmatrix}$ where $r \in W^*$ and $B \in \CalC_h$. 

Suppose $z$ is a nonscalar with trace $\tr(z) = \tr(X) + \tr(Y)$ and let $z := \begin{pmatrix} a & b \\ c & A \end{pmatrix}$ with respect to the decomposition $V = L_y \oplus W$, where $b \in W^*, c \in W, A \in \End_F(W)$.  Now, consider the map 
$$
\pi_z : \Set{ (x,y) \in X \times Y : x + y = z} \to \P(V), \qquad
(x,y) \mapsto L_y
$$
where $L_y$ is the unique $\lambda$-eigenline of $y$.  If $L_y$ is $z$-stable, then $\pi_z^{-1}(L_y) = \emptyset$. But, there are \tit{many} lines in $\P(V)$ which are not $z$-stable \cite[proof of Theorem $1.2$]{KM}; indeed, if $G_z$ denotes the set of lines which are not $z$-stable then $\abs{G_z - q^{n-1}} \leq 6 q^{n-2}$.  

The aim of \cite{KM} was different from that of this paper. In \cite{KM}, we were interested in estimating the $number$ of decompositions of $z$ as the sum $x + y$ with $x \in X$ and $y \in Y$ and prove that the number is approximately independent of the nonscalar matrix $z$, subject to the trace condition. In this article, we are interested to prove the $existence$ of a decomposition of $z$ as $x + y$, with $x \in X$ and $y \in Y$. In other words, in this paper we would like to show that  $\# \pi_z^{-1}(L_y) > 0$ for \tit{some} $y \in Y$. We give two proofs for this assertion. The first proof actually provides a lower bound on the \tit{number} of decompositions $z = x + y$ but it does not apply when $q = 2$, while the second one proves only the \tit{existence} of the decomposition but applies for every $q$.

\subsection{Quantitative analysis of additive decompostions}
Instead of counting the matrices $B \in \CalC_h$ for which $c$ is a cyclic vector for $A- B$, we count those elements $B \in \CalC_h$ for which $c$ is \tit{not} a cyclic vector in $A- B$, and subtract it from the cardinality of $\CalC_h$. Since our goal in \cite{KM} was different from here, there we obtained \tit{non-optimal} bounds. Here we estimate $\#\pi_z^{-1}(L)$ more carefully in order to show that $\pi_z^{-1}(L) > 0$ for all $q > 2$, thus establishing the existence of a decomposition $z = x + y$, with $x \in X$ and $y \in Y$. Thus it suffices to establish a tighter upper bound for $Z_{A,c}$, for $A \in \End_F(W)$ and $0 \neq c \in W$, which is defined by
$$
Z_{A,c} := \Set{B \in \CalC_h : c \trm{ is not cyclic for } A - B}.
$$
We begin with a preliminary lemma.  Let $K \sub U \sub V$ be subspaces of $V$. Recall that a map $T: K \to U$ is simple if there is no \tit{nonzero} subspace $S \sub K$ such that $T(S) \sub S$.

\begin{lem}\label{simplemapsnumber}
Let $U$ be a subspace of $V$ of dimension $r$, and $K$ be a proper subspace of $U$, so that $\dim K = r  - s$ for some $s\geq 1$.  Then
$$
\# \Set{ T: K \to U : T \trm{ is simple }} = \prod_{j = 1}^{r-s} (q^r - q^j).
$$
\end{lem}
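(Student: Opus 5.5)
The plan is to count \emph{all} linear maps $K \to U$ in two ways, stratifying them by their largest invariant subspace. This gives a recursion for the number of simple maps, and I then check that the product formula satisfies it. Write $d := r-s = \dim K$, and let $N_q(d,r)$ denote the number of simple maps $T : K \to U$ with $\dim K = d$ and $\dim U = r$. A priori $N_q(d,r)$ depends only on $d$, $r$ and $q$, since any two such pairs $K \subset U$ are isomorphic. The base case is $d = 1$: with $K = \mathrm{span}(v)$, the map $T$ is simple if and only if $Tv \notin \mathrm{span}(v)$, giving $q^r - q$ maps. We also use $N_q(0,r) = 1$.

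\textbf{Step 1 (maximal invariant subspace).} For any linear $T : K \to U$, if $T(S_1) \subset S_1$ and $T(S_2) \subset S_2$, then $T(S_1+S_2) \subset S_1 + S_2$. Hence there is a unique largest subspace $K_0 \subset K$ with $T(K_0) \subset K_0$. Then $T|_{K_0} \in \End(K_0)$ is arbitrary, and $T$ induces $\bar T : K/K_0 \to U/K_0$. I claim $\bar T$ is simple. Suppose $\bar S \subset K/K_0$ satisfies $\bar T \bar S \subset \bar S$, and let $S \supset K_0$ be its preimage. Then $T(S) \subset S + K_0 = S$, so maximality forces $S = K_0$, that is, $\bar S = 0$.

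Conversely, fix $K_0$ of dimension $e$, an endomorphism of $K_0$, and a simple map $K/K_0 \to U/K_0$. Choose a complement $K'$ of $K_0$ in $K$. Then $T|_{K'}$ is determined modulo $K_0$, so there are $q^{e(d-e)}$ lifts. Every such lift has $K_0$ as its maximal invariant subspace, because an invariant subspace strictly larger than $K_0$ could be replaced by its sum with $K_0$, and its image would be a nonzero $\bar T$-invariant subspace. Note that $\dim(U/K_0) - \dim(K/K_0) = s$ is unchanged.

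\textbf{Step 2 (recursion).} Counting all $q^{rd}$ maps according to $K_0$ gives
\begin{equation*}
q^{rd} \;=\; \sum_{e=0}^{d} \binom{d}{e}_{q}\, q^{e^2}\, q^{e(d-e)}\, N_q(d-e,\,r-e).
\end{equation*}
The $e = 0$ term is $N_q(d,r)$, so this determines $N_q(d,r)$ by induction on $d$. It therefore suffices to show that $P(d,r) := \prod_{j=1}^{d}(q^r - q^j)$ satisfies the same identity:
\begin{equation*}
q^{rd} = \sum_{e=0}^{d} \binom{d}{e}_{q} q^{ed} \prod_{j=1}^{d-e}(q^{r-e}-q^j).
\end{equation*}

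\textbf{Step 3 (the identity; main obstacle).} This $q$-series identity is where I expect the real work to lie. My first attempt is to rewrite $q^{ed}(q^{r-e} - q^j) = q^{ed-e}(q^{r} - q^{j+e})$ factor by factor, which turns the $e$-th summand into $\binom{d}{e}_q q^{e(d-e)}\cdots\prod_{i=e+1}^{d}(q^r - q^i)$. I would then aim to recognise the sum as a telescoping or Cauchy $q$-binomial expansion of $x^d$ in the basis $\prod_{i>e}(x - q^i)$, evaluated at $x = q^r$.

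A cleaner fallback is to prove the identity by induction on $d$ via the $q$-Pascal rule. Alternatively, one can avoid the identity altogether with a direct bijective count. Fix a basis $v_1,\dots,v_d$ of $K$ and choose $Tv_1, Tv_2, \dots$ successively, so that at step $j$ there are exactly $q^r - q^j$ admissible choices. For instance, I would require $Tv_j$ to avoid a $j$-dimensional subspace determined by the earlier choices, mirroring the cyclic-vector count for $\mathrm{GL}$. I would try the inductive $q$-Pascal verification first and keep the bijective version as a check.
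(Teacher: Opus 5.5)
Your Steps 1 and 2 are correct, and they give a genuinely different route from the paper. The paper fixes a monic irreducible $g$ of degree $r$ and double counts the restriction map $\CalC_g\to\{\text{simple } T:K\to U\}$, using $\#\CalC_g=\prod_{j=1}^{r-1}(q^r-q^j)$ and Ram's count $\prod_{j=r-s+1}^{r-1}(q^r-q^j)$ of extensions of each simple map. You instead stratify all $q^{rd}$ maps by their maximal invariant subspace, which needs no irreducible polynomials and no extension theorem. However, the proposal stops where the content is: Step 3 is not proved, so as written you have only shown that $N_q$ is determined by a recursion. Also, the factor $q^{ed}$ occurs once, not in each of the $d-e$ factors. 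So the $e$-th summand is $\binom{d}{e}_q q^{e^2}\prod_{i=e+1}^{d}(q^r-q^i)$, and what you need is $F_d(q^r)=q^{rd}$ for
\[
F_d(x):=\sum_{e=0}^{d}\binom{d}{e}_q\,q^{e^2}\prod_{i=e+1}^{d}(x-q^i).
\]
Your $q$-Pascal induction does close this, provided you prove $F_d(x)=x^d$ as a polynomial identity (equivalently, for all $r$ at once), since the induction shifts $r$. Split $\binom{d}{e}_q=\binom{d-1}{e}_q+q^{d-e}\binom{d-1}{e-1}_q$. The first part is $(x-q^d)F_{d-1}(x)$ after pulling out the factor $i=d$. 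In the second part, shift $e\mapsto e+1$, re-index the product, and write $x-q^{i+1}=q(x/q-q^i)$. The powers of $q$ collect to $q^{(d-1-e)+(e+1)^2+(d-1-e)}=q^{e^2+2d-1}$, so this part is $q^{2d-1}F_{d-1}(x/q)$. Hence $F_d(x)=(x-q^d)x^{d-1}+q^{2d-1}(x/q)^{d-1}=x^d$, with base case $F_0=1$.

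Your bijective fallback, by contrast, rests on a false premise: the bad values of $Tv_j$ do not in general form a subspace. Take $j=2$ and $Tv_1=w_1\notin\mathrm{span}(v_1,v_2)$, which is possible since $d\le r-1$ forces $r\geq 3$ once $d\ge 2$. Then the bad set is $\{-aw_1+\lambda(av_1+v_2): a,\lambda\in\F_q\}$. It contains $v_2$ and $-w_1$ but not $v_2-w_1$. What is true is that it has exactly $q^j$ elements; this is Lemma~\ref{ram1}. Iterating that lemma from the zero subspace gives $\prod_{j=1}^{d}(q^r-q^j)$ at once, since restrictions of simple maps are simple and the domain has dimension $j-1\le r-2$ at each step. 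But that count is itself the nontrivial input, so it cannot be obtained by ``avoiding a subspace''.
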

\begin{proof}
Fix one monic \tit{irreducible} polynomial $g(T)$ of degree $r$. Let $$\CalC_g = \Set{ B \in \End_{F}(U) : \chi_B(T) = g(T)}.$$ These operators form a single conjugacy class, and its size is 
$$
\# \CalC_g = \frac{\# GL_r(q)}{q^r - 1} = \prod_{j = 1}^{r-1}(q^r - q^j),
$$
since the centralizer in $GL_r(q)$ is the isomorphic to the multiplicative group of the field $F[B] \cong \F_{q^r}$, and hence has order $q^r -1$.

Every restriction $B|_K$, with $B \in \CalC_g$, is simple. Indeed, suppose a nonzero $S \sub K$ satisfies $(B|_K)(S) \sub S$. Then $S$ is also a nonzero proper $B$-invariant subspace of $U$, because $S \sub K \subsetneq U$. In a basis beginning with a basis of $S$, the matrix of $B$ is block upper triangular. Consequently, $\chi_B = g$ factors into two polynomials of positive degree, contradicting irreducibility of $g$. 

We therefore have a restriction map 
$$
\rho: \CalC_g \to \Set{ \trm{simple maps } K \to U}, \qquad B \mapsto B|_K.
$$
By  \cite[Lemma $2.7$]{Ra}, given a simple map $T : K \to U$, the number of extensions $B$ with $\chi_B = g$ is given by 
\begin{equation}\label{simplemaps}
\# \rho^{-1}(T) = \prod_{j = r-s+1}^{r-1}(q^r - q^j).
\end{equation}
This number is positive (with the empty product taken to be $1$) and is the same for every simple $T$. Thus, every simple maps occurs as a restriction, each simple map occurs exactly the number of times given by \eqref{simplemaps}, and different simple maps have disjoint sets of extensions. Thus, if $M$ denotes the number of distinct simple maps $T: K \to U$, then 
$$
\prod_{j = 1}^{r-1} (q^r - q^j) = M \prod_{j = r - s + 1}^{r-1}(q^r - q^j),
$$
so that 
$$	
M = \frac{\prod_{j = 1}^{r-1}(q^r - q^j)}{\prod_{j = r-s+1}^{r-1}(q^r - q^j)} = \prod_{j =1}^{r-s}(q^r - q^j)
$$
as desired.
\end{proof}
\noindent For $j \geq 1$ let 
$$
P_j(q) := \prod_{i = 1}^j \left( 1 - \frac{1}{q^i} \right), \qquad P_0(q) :=1.
$$
\begin{lem}\label{fau}
Let $W$ be a subspace of $V$ of dimension $m$, and let $A \in \End_F(W)$ and $U$ a nonzero proper subspace of $W$ of dimension $r$, so that $1 \leq r < m$. Let $d := m-r$, and $\tau := \pi \circ A |_U : U \to W/U$, and $s$ be the rank of $\tau$. Define
$$
\CalF_{A,U} = \Set{ B \in \CalC_h : (A-B)U \sub U}.
$$
Then 
$$
\# \CalF_{A,U} = 
\begin{cases} 
0, & \trm{ if } s= 0, \\
q^{m^2-m-rd} \frac{P_{r-1}(q) P_{d-1}(q)}{P_{s-1}(q)}, &  \trm{ if } s \geq 1.
\end{cases}
$$
\end{lem}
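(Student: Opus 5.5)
The plan is to translate the condition $(A-B)U \sub U$ into a condition on the restriction $B|_U : U \to W$. Then I would count the admissible restrictions with Lemma \ref{simplemapsnumber}, and count their extensions to elements of $\CalC_h$ with \cite[Lemma $2.7$]{Ra}.

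First, $(A-B)U\sub U$ holds iff $B|_U = A|_U + T$ for some $T\in \End_F(U)$. Equivalently, $\pi\circ B|_U = \tau$, where $\pi : W \to W/U$ is the projection. Since $h$ is irreducible of degree $m$, the argument in the proof of Lemma \ref{simplemapsnumber} shows that $B|_U : U \to W$ is simple for every $B\in\CalC_h$. Conversely, \cite[Lemma $2.7$]{Ra} applies to the simple map $U \to W$, with $\dim U = r$ and codimension $d \ge 1$. It says that every simple map $S : U\to W$ has exactly $\prod_{j=r+1}^{m-1}(q^m-q^j)$ extensions $B$ with $\chi_B = h$, and distinct $S$ have disjoint sets of extensions. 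Hence
$$
\#\CalF_{A,U} = N \cdot \prod_{j=r+1}^{m-1}(q^m-q^j), \qquad N := \#\Set{S : U \to W \trm{ simple} : \pi\circ S = \tau}.
$$

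Next I would compute $N$. If $s=0$, then $\tau = 0$, so $S(U)\sub U$ and $U\neq 0$ is invariant; thus no such $S$ is simple, and $N=0$, which gives the first case. Now suppose $s\ge1$ and let $K=\ker\tau$, so $\dim K = r-s < r$.

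The key observation is this. Let $S'\sub U$ be a nonzero subspace with $S(S')\sub S'$. Then $\tau(S') = \pi(S(S')) = 0$, so $S'\sub K$. Therefore $S$ is simple iff $S|_K : K \to U$ is simple in the sense of Lemma \ref{simplemapsnumber}; note that $S(K)\sub U$ because $\tau$ vanishes on $K$. Moreover, as $T$ ranges over $\End_F(U)$, the restriction $S|_K$ ranges over all linear maps $K\to U$, since $A|_K$ already lands in $U$.

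Choose a complement $K'$ of $K$ in $U$, with $\dim K' = s$. On $K'$ the values of $S$ are fixed modulo $U$ by $\tau$, and their $U$-components are free; this gives $q^{rs}$ choices. These choices are independent of $S|_K$. Applying Lemma \ref{simplemapsnumber} (with $U$ in the role of the ambient $r$-dimensional space and $K$ proper of dimension $r-s$) gives
$$
N = q^{rs}\prod_{j=1}^{r-s}(q^r-q^j).
$$

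Finally, I would simplify $q^{rs}\prod_{j=1}^{r-s}(q^r-q^j)\prod_{j=r+1}^{m-1}(q^m-q^j)$ into the stated form. To do this, pull out powers of $q$ and write each product as a power of $q$ times a ratio of $P_\bullet(q)$'s. Concretely, $\prod_{j=1}^{r-s}(q^r-q^j)$ contributes $P_{r-1}(q)/P_{s-1}(q)$ times a power of $q$, and the second product contributes $P_{d-1}(q)$ times a power of $q$. The remaining work is to check that the exponents total $m^2-m-rd$.

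The conceptual step I expect to matter most is the reduction "$S$ simple $\iff S|_K$ simple", together with the observation that $S|_K$ is unconstrained. The main risk of error is the final bookkeeping of the powers of $q$; I would check it on small cases such as $r=1$ and $s=r$.
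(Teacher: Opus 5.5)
Your proposal is correct and follows the paper's proof essentially step for step. The paper likewise rewrites $(A-B)U\sub U$ as $\pi\circ B|_U=\tau$, parametrizes the lifts of $\tau$ by $D\in\End_F(U)$ (your $A|_U+T$), shows a lift is simple iff its restriction to $K=\ker\tau$ is, counts these lifts as $q^{rs}\prod_{j=1}^{r-s}(q^r-q^j)=q^{r^2}P_{r-1}(q)/P_{s-1}(q)$ via Lemma \ref{simplemapsnumber}, and multiplies by Ram's extension count $\prod_{j=r+1}^{m-1}(q^m-q^j)=q^{m(d-1)}P_{d-1}(q)$; the exponent bookkeeping you deferred does close, since $r^2+m(d-1)=r^2+m(m-r)-m=m^2-m-rd$.
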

\begin{proof}
Choose a complement of $U$ in $W$; it is isomorphic to $W/U$. For a given $D \in \End(U)$, a lift $T_D : U \to W$ of $\tau$ (depending on the choice of $D$) can then be written as $T_D(u) = Du + \tau u $  (after identifying $W/U$ with the complement of $U$ in $W$). 
Consider the following diagram, where $\pi$ is the usual projection map, $K = \ker(\tau)$ and $\iota$ is the usual inclusion.
\begin{center}
\begin{tikzcd}[row sep=large, column sep=large]
  & W \arrow[r, dotted] & W \arrow[d, "\pi"] \\
K \arrow[r, hook, "\iota"]
  & U \arrow[u, hook] \arrow[ur, dotted, "T_D"] \arrow[r, "\tau"']
  & W/U
\end{tikzcd}
\end{center}

We have $\dim K = r-s$. The map $T_D$ is simple if and only if $D|_K : K \to U$ is simple. Indeed, suppose $U'$ be a nonzero $T_D$-invariant subspace of $U$. Then $\pi \circ T_D (U') =  \tau(U') = 0$, so that $U'$ is contained in $K$. On the other hand $T_D$ agrees with $D$ on $K$ so that $D|_{K}(U') \sub U'$ and $D|_{K}$ is not simple. The converse is clear.

Thus we count allowable $D$'s (so that $T_D$ is simple) in two stages. First, note that, for $s \geq 1$, it follows from Lemma \ref{simplemapsnumber} that the number of simple maps $S: K \to U$ is $\prod_{j =1}^{r-s}(q^r - q^j)$. Now, secondly, for each such choice of $S$,  we want to count linear maps $D: U \to U$ satisfying $D|_K = S$. Choose a basis $e_1, \ldots, e_{r-s}$ of $K$, and extend it to a basis $e_1, \ldots, e_{r-s} e_{r-s+1}, \ldots, e_r$ of $U$. The condition $D|_K = S$ already determines $D(e_i) = S(e_i)$, $1 \leq i \leq r-s$. But the images of the remaining $s$ basis vectors can be chosen arbitrarily in $U$. Each image has $q^r$ possible values since $\abs{U} = q^r$. Every such choice determines a unique linear extension, so the number of extensions is $(q^r)^s = q^{rs}$.

It follows that the number of simple lifts of $\tau$ is exactly the product of number of choices of $S$ and for each choice of $S$ the number of extension $D$: 
\begin{align*}
\# \Set{ \trm{simple lifts of } \tau } &=  \prod_{j =1}^{r-s}(q^r - q^j) \cdot q^{rs}  = q^{rs} q^{r(r-s)} \prod_{j =1}^{r-s} (1-q^{j-r})\\
&= q^{r^2} \prod_{j =1}^{r-s} (1- q^{-(r-j)}) = q^{r^2}\frac{P_{r-1}(q)}{P_{s-1}(q)}.
\end{align*}

The condition $(A-B)U \subset U$ in the definition of $\CalF_{A,U}$ is equivalent to $\pi \circ B|_U = \pi \circ A|_U$. Indeed, if for all $u \in U$, $(A-B)u \in U$ it follows that $Au + U = Bu + U$ in $W/U$ so that $\pi \circ B|_U = \pi \circ A|_U$, and conversely. Therefore we want to count those $B \in \CalC_h$ such that $\pi \circ B|_U  = \tau$, in other words, the extensions in $\CalC_h$ of the simple lifts of $\tau$.
Each simple lift $T_D$ of $\tau$ has $\prod_{j = r+1}^{m-1}(q^m -q^j) = q^{m(d-1)} P_{d-1}(q)$ extensions in $\CalC_h$, again by Lemma $2.7$ \cite{Ra}; see the diagram above. Thus 
$$
\# \CalF_{A,U} = q^{m(d-1)} P_{d-1}(q) \cdot q^{r^2}\frac{P_{r-1}(q)}{P_{s-1}(q)},
$$
proving the equality.  Finally, if $s = 0$, every lift preserves $U$, so none can extend to an irreducible operator in $\CalC_h$.
\end{proof}

\begin{prop}\label{x+yprop}
For every $q > 2$, and for every nonscalar matrix $z$ such that $\tr(z) = \tr(X) + \tr(Y)$, there exists $x \in X$ and $y \in Y$ such that $z = x + y$. 
\end{prop}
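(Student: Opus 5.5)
Fix a nonscalar $z$ with $\tr(z)=\tr(X)+\tr(Y)$. Since $z$ is nonscalar, some line $L\in\P(V)$ is not $z$-stable; we can even take $L\in G_z$, which is nonempty because $\abs{G_z-q^{n-1}}\le 6q^{n-2}$. Fix a complement $W$ of $L$ and write $z=\begin{pmatrix} a & b\\ c & A\end{pmatrix}$, so that $c\neq 0$. By the characterization in \cite{KM}, the decompositions $z=x+y$ with $L_y=L$ correspond to matrices $B\in\CalC_h$ for which $c$ is cyclic for $A-B$: the row $r$ is then uniquely determined, and $x=z-y$ lies in $X$. It therefore suffices to prove
$$\#Z_{A,c}<\#\CalC_h=\prod_{j=1}^{m-1}(q^m-q^j)=q^{m(m-1)}P_{m-1}(q).$$
The case $m=1$ is immediate: $W$ is a line and $c\neq 0$ is automatically cyclic, so $Z_{A,c}=\emptyset$. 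Assume from now on that $m\ge 2$.

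\emph{Covering $Z_{A,c}$.} If $c$ is not cyclic for $A-B$, then the cyclic subspace generated by $c$ is a proper subspace $U$ with $(A-B)U\sub U$, $c\in U$ and $1\le r:=\dim U<m$. Hence
$$Z_{A,c}\sub\bigcup_{\substack{c\in U\subsetneq W}}\CalF_{A,U}.$$
Lemma~\ref{fau} gives $\#\CalF_{A,U}\le q^{m^2-m-rd}\,P_{r-1}(q)P_{d-1}(q)/P_{s-1}(q)$ with $d=m-r$. We bound the ratio $P_{r-1}P_{d-1}/P_{s-1}$ crudely by $1/P_\infty(q)$. For $q\ge 3$ we have $P_\infty(q)\ge P_\infty(3)\approx 0.56$.

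\emph{Summing over $U$.} The number of $r$-dimensional subspaces $U$ of $W$ containing the fixed vector $c$ equals the Gaussian binomial $\binom{m-1}{r-1}_q$, which is at most $q^{(r-1)(m-r)}/P_\infty(q)$. Consequently
$$\#Z_{A,c}\le \frac{q^{m^2-m}}{P_\infty(q)^2}\sum_{r=1}^{m-1}q^{-rd+(r-1)d}=\frac{q^{m(m-1)}}{P_\infty(q)^2}\sum_{r=1}^{m-1}q^{-(m-r)}\le \frac{q^{m(m-1)}}{P_\infty(q)^2(q-1)}.$$
Comparing with $\#\CalC_h\ge q^{m(m-1)}P_\infty(q)$, it remains to check $P_\infty(q)^3(q-1)>1$, together with a quantitative margin.

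\emph{Main obstacle.} The crude estimate above is too weak at $q=3$: there $P_\infty(3)^3\cdot 2\approx 0.35$, so the inequality fails. The real work is in sharpening the constants, and this is where $q>2$ is genuinely used. My first attempt will be to avoid replacing each $P$ by $P_\infty$. Instead, I will keep the exact factor $P_{d-1}(q)\binom{m-1}{r-1}_q$ and use $\binom{m-1}{r-1}_q P_{d-1}(q)\le q^{(r-1)d}/P_{r-1}(q)$. This cancels the factor $P_{r-1}$ coming from Lemma~\ref{fau}, leaving $1/P_{s-1}\le 1/P_\infty$. We then get
$$\#Z_{A,c}\le \frac{q^{m(m-1)}}{P_\infty(q)(q-1)},$$
and the condition becomes $P_\infty(q)^2(q-1)>1$. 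At $q=3$ this reads $2\cdot 0.314\approx 0.63$, which is still short. To close the gap I will treat the term $r=m-1$ (so $d=1$) separately, since it contributes the dominant $q^{-1}$. In that term $P_{d-1}=1$, and $s\le d=1$ forces $s=1$, so $P_{s-1}=1$ exactly. The remaining terms with $r\le m-2$ sum to at most $q^{-2}/(1-q^{-1})$ times $1/P_\infty$. This yields
$$\#Z_{A,c}\le q^{m(m-1)}\Bigl(\tfrac1q+\tfrac{1}{P_\infty(q)\,q(q-1)}\Bigr),$$
and for $q\ge 3$ the bracket is below $P_{m-1}(q)$; for small $m$ one can check directly using exact $P_{m-1}$. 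The difference should give the stated lower bound $\tfrac19 q^{(n-1)(n-2)}$. If this bookkeeping is still not tight enough at $q=3$, I would refine further by splitting off $r=m-2$ as well.
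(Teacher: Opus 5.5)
Your reduction to $\#Z_{A,c}<\#\CalC_h$, the covering $Z_{A,c}\sub\bigcup_{c\in U\subsetneq W}\CalF_{A,U}$ and the use of Lemma~\ref{fau} match the paper's setup. However, your final estimate does not prove the proposition at $q=3$, which is the one delicate case.

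With $P_\infty(3)\approx 0.560$, your bracket is $\frac13+\frac{1}{6P_\infty(3)}\approx 0.333+0.298=0.631$. On the other hand $P_2(3)=16/27\approx 0.593$, and $P_{m-1}(3)$ decreases to $P_\infty(3)$. So your claim that the bracket is below $P_{m-1}(q)$ for $q\ge 3$ is false for every $m\ge 3$ when $q=3$; it holds only for $q\ge 4$. Checking small $m$ by hand cannot rescue large $m$.

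Splitting off more terms does not cure this, because the loss is systematic. Your inequality $\binom{m-1}{r-1}_qP_{d-1}\le q^{(r-1)d}/P_{r-1}$ discards the factor $P_{m-1}/(1-q^{-d})$ in \emph{every} term. Even if you then use $s\le d$ to bound $1/P_{s-1}\le 1/P_{d-1}$, the resulting bound $\sum_{d=1}^{m-1}q^{-d}/P_{d-1}(q)$ tends to about $0.595$ at $q=3$. It already exceeds $P_{m-1}(3)$ once $m\ge 5$.

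The missing idea is to pair $P_{d-1}$ with $1/P_{s-1}$, rather than with the Gaussian coefficient. The rank $s$ of $\tau:U\to W/U$ is at most $d=\dim W/U$, so $P_{d-1}/P_{s-1}\le 1$ and hence $\#\CalF_{A,U}\le q^{m^2-m-rd}P_{r-1}(q)$.

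Next use the exact identity $\binom{m-1}{r-1}_q=q^{(r-1)d}P_{m-1}/(P_{r-1}P_d)$. It cancels $P_{r-1}$ but keeps the crucial factor $P_{m-1}$, giving $\#Z_{A,c}\le q^{m^2-m}P_{m-1}\sum_{d=1}^{m-1}q^{-d}/P_d$. Since $q^{-d}/P_d=1/P_d-1/P_{d-1}$, the sum telescopes to $1/P_{m-1}-1$. Therefore $\#Z_{A,c}\le q^{m^2-m}(1-P_{m-1})$ and $\#\pi_z^{-1}(L)\ge q^{m^2-m}(2P_{m-1}-1)$.

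Positivity now needs only $P_{m-1}(q)>1/2$. For $q\ge 3$ one has $P_{m-1}(q)\ge(1-q^{-1})(1-\sum_{i\ge 2}q^{-i})=1-q^{-1}-q^{-2}\ge 5/9$. This is where $q>2$ enters, and it gives the margin $\frac19 q^{m^2-m}$.

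One minor point: $\abs{G_z-q^{n-1}}\le 6q^{n-2}$ does not by itself force $G_z\ne\emptyset$ when $q\le 6$. Your other reason, that a nonscalar $z$ cannot stabilize every line, is the one to use.
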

\begin{proof}
Because $s \leq d$, the formula in Lemma \ref{fau} gives the convenient bound
$$
\abs{\CalF_{A,U}} \leq q^{m^2-m-r(m-r)}P_{r-1}(q).
$$
Since
$$	
Z_{A,c} = \bigcup_{F c \subseteq U  \subsetneq W} 
\CalF_{A, U},
$$
we have
$$
\abs{Z_{A,c}} \leq \sum_{r = 1}^{m-1}\begin{bmatrix} m-1 \\ r-1 \end{bmatrix}_q	 q^{m^2-m-r(m-r)} P_{r-1}(q).
$$
For $d = m-r$, the exact Gaussian coefficient is 
$
\begin{bmatrix}m-1 \\ r-1 \end{bmatrix}_q = q^{(r-1)d} \frac{P_{m-1}(q)}{P_{r-1}(q) P_d(q)}.
$
Thus
$$
\abs{Z_{A,c}} \leq q^{m^2-m}P_{m-1}(q) \sum_{d = 1}^{m-1} \frac{q^{-d}}{P_d(q)}.
$$
Since $\frac{q^{-d}}{P_d(q)} = \frac{1}{P_d(q)} - \frac{1}{P_{d-1}(q)}$, the sum telescopes, and we get 
$$
\abs{Z_{A,c}} \leq q^{m^2-m}(1- P_{m-1}(q)).
$$
Consequently every line that is not $z$-stable, we have 
\begin{align*}
\# \pi_z^{-1}(L) &= \abs{\CalC_h} - \abs{Z_{A,c}}\\
& \geq q^{m^2-m}P_{m-1}(q) - q^{m^2 -m}(1- P_{m-1}(q))\\
& = q^{m^2-m}(2 P_{m-1}(q) - 1).
\end{align*}
For $q \geq 3$, 
$$
P_{m-1}(q) \geq 1 - \frac{1}{q} - \frac{1}{q^2} \geq \frac{5}{9}.
$$ 
Indeed, separating the first factor and using $\prod (1- a_i) \geq 1 - \sum a_i$ on the remaining factors. 
Hence $\#\pi_z^{-1}(L) \geq \frac{1}{9}q^{m^2 - m} > 0$ as desired.

\end{proof}

\begin{remark}
Note that for $q = 2$, the quantity $2P_{m-1}(2) -1$ equals zero when $m =2$ and is strictly negative for every $m \geq 3$. Consequently, the estimate in the proof above does not establish positivity of the fibers when $q = 2$ and $m \geq 2$. The case $m = 1$, corresponding to $n =2$, however goes through. 
\end{remark}

\subsection{Existence of additive decompositions}

Given a matrix $B \in \CalC_h$ we can form the matrix $y_{B,r} := \begin{pmatrix} \lambda & r \\ 0 & B \end{pmatrix}$ where $\lambda \in F$,  which belongs to $Y$, and furthermore, we have $z-y_{B,r} \in X$ for some  $r \in W^*$ precisely when $c$ is a cyclic vector for $A - B$ \cite[Proposition $4.6$]{KM}. Most importantly, in this case, $r$ is unique. Thus we observe that among the lines which are not $z$-stable, the cardinality of the fiber is given by
$$
\#\pi_z^{-1}(L) = \# \Set{B \in \CalC_h : c \trm{ is cyclic for } A - B}.
$$

The section is devoted to proving that there exists $B \in \CalC_h$ such that $c$ is a cyclic vector for $A-B$, so that $\# \pi_z^{-1}(L) > 0$, which implies the existence of a decomposition $z = x + y$, where $x \in X$ and $y \in Y$ as desired.
Thus, we have reduced our problem to demonstrating the existence of a $B \in \CalC_h$ such that $c$ is cyclic for $A-B$. We carry out this construction inductively beginning with $c$ and ending with a $B \in \CalC_h$ such that $c$ is cyclic for $A-B$. 

For a subspace $U \subsetneq W$, a linear map $S : U \to W$ is called \tit{simple} if there is no nonzero subspace $L \subseteq U$ satisfying $S(L) \subseteq L$. We use two extension facts:

\begin{lem}(\cite[Lemma $2.10$]{ARV})\label{ram1}
 Let $R \subsetneq U \subsetneq W$ be subspaces of $W$, where $\dim R = r$,  $\dim U = r +1$, $\dim W = m$ and $0 \leq r \leq m-2$. Then every simple map $S : R \to W$ has exactly $q^m - q^{r+1}$ simple extensions to $U$. Thus, after fixing $v \in U \setminus R$, exactly $q^{r+1}$ possible values of $S(v)$ (in $W$) produce nonsimple extensions.
\end{lem}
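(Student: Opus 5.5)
This is a cited lemma from \cite{ARV}, but it can be proved directly. I would do so by first classifying the obstructions to simplicity of an extension, and then counting the bad values of $S(v)$.

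Fix $v \in U \setminus R$, so that $U = R \oplus Fv$. An extension $\tilde S : U \to W$ of $S$ is determined by $w := \tilde S(v) \in W$, which can be any of the $q^m$ vectors of $W$. Suppose a nonzero subspace $L \subseteq U$ satisfies $\tilde S(L) \subseteq L$. Then $L \not\subseteq R$, since otherwise $L$ would be an $S$-invariant subspace of $R$, contradicting simplicity of $S$. Hence $\dim(L \cap R) = \dim L - 1$.

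The key step is to show that the extension is nonsimple if and only if $\tilde S$ has a nonzero invariant subspace containing some vector of the form $v + u$ with $u \in R$. Given such an $L$, I would take $L_0 := L \cap R$ and note that $L$ is spanned by $L_0$ together with one vector $v + u_0$. To avoid tracking higher-dimensional $L$, I would look for a convenient one-dimensional-type criterion. One natural try is the following.

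Define $\Phi : W \to W/\tilde S\text{-}\ldots$; this is awkward. A cleaner route is to count through a Ram-type identity rather than by direct parametrization. By Lemma \ref{simplemapsnumber}, applied with ambient space $W$ of dimension $m$, the number of simple maps $K \to W$ with $\dim K = k < m$ is $N_k = \prod_{j=1}^{k}(q^m - q^j)$. This count depends only on $\dim K$, so in particular it does not depend on $K$ itself. Every simple map on $U$ restricts to a simple map on $R$, since an invariant subspace of the restriction is invariant for the original map. Thus restriction defines a map from simple maps on $U$ to simple maps on $R$, and the lemma asks us to show that every fibre of this map has size exactly $q^m - q^{r+1}$. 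The ratio $N_{r+1}/N_r = q^m - q^{r+1}$ already gives this on average.

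The main obstacle is proving that the fibre size is \emph{uniform} in $S$. For this I would use $GL(W)$-equivariance together with a direct argument. Fix $S$, and for each $w \in W$ write $S_w$ for the extension with $S_w(v) = w$. I claim that $S_w$ is nonsimple if and only if $w \in B_S$, a set of size exactly $q^{r+1}$. I would identify this set as
\[
B_S = \bigcup_{u \in R} \{\, w : \text{the orbit of } v+u \text{ stays in } U \text{ and closes up} \,\}.
\]
Equivalently, $w$ is bad exactly when it is determined by the choice of $u \in R$ together with a scalar, which is consistent with the count $q^r \cdot q = q^{r+1}$. Concretely, for each $u \in R$ and each $\mu \in F$ there should be a unique $w$ making some subspace containing $v + u$ invariant with the relevant eigen-type data $\mu$. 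I would prove this uniqueness by building the Krylov chain $v + u,\ \tilde S(v+u), \ldots$ inside $U$. Simplicity of $S$ forces this chain to involve $v$ at every stage, and that rigidity is what pins $w$ down uniquely. The same rigidity should also make the sets corresponding to distinct pairs $(u,\mu)$ disjoint.

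If this direct parametrization proves too delicate, the fallback is to invoke \cite[Lemma $2.7$]{Ra}, as the paper does earlier. Its uniform extension counts, combined with the product formula of Lemma \ref{simplemapsnumber}, give a fibre count independent of $S$. Subtracting this count from $q^m$ then yields exactly $q^{r+1}$ nonsimple values of $S(v)$.
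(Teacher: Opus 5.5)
Your direct route has a genuine gap, and the structural claim behind it is false. The bad values of $S(v)$ are not indexed injectively by pairs $(u,\mu)\in R\times F$ through eigen-data. The reason is that an $S_w$-invariant subspace of $U$ need not contain an eigenvector of $S_w$, and a single $w$ can have several invariant lines. Take $r=1$, $R=Fa$, and $S(a)=b$ with $b\in U\setminus Fa$ (possible once $m\ge 3$). Then $S_w(U)\subseteq U$ for every $w\in U$, so the bad set is all of $U$. For every $u=\lambda a$, the set of $w$ for which the orbit of $v+u$ stays in $U$ is again all of $U$. That gives $q^2$ values per $u$, not $q$, and the sets for different $u$ coincide rather than being disjoint. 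Moreover, as $w$ runs over $U$, the matrix of $S_w|_U$ in the basis $(a,v)$ realizes every monic quadratic exactly once as characteristic polynomial. So the $w$ with irreducible characteristic polynomial have no invariant line $F(v+\lambda a)$ at all, while those with two distinct roots in $F$ have two. The count $q^2$ is correct here, but no ``Krylov rigidity'' produces it: invariant subspaces of dimension $>1$ on which $S_w$ acts irreducibly have to be handled. (Also, the sentence defining $\Phi$ is unfinished, and the average $N_{r+1}/N_r$ cannot give uniformity, so it does not help.)

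What actually proves the statement is your fallback, and you should write it out; the paper itself only cites \cite{ARV}. Every $B\in\CalC_h$ with $B|_R=S$ restricts to a simple extension of $S$ to $U$, by irreducibility of $h$ as in the proof of Lemma \ref{simplemapsnumber}. By \cite[Lemma 2.7]{Ra} there are $\prod_{j=r+1}^{m-1}(q^m-q^j)$ such $B$, and each simple extension $\tilde S$ of $S$ to $U$ has exactly $\prod_{j=r+2}^{m-1}(q^m-q^j)$ extensions in $\CalC_h$. Dividing gives $q^m-q^{r+1}$ for every $S$; the product formula of Lemma \ref{simplemapsnumber} is not needed. This only trades one citation for an equivalent one, since the general-codimension count of \cite{Ra} is exactly what you get by iterating the present lemma and then applying Lemma \ref{ram2}.

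The missing idea for a genuinely direct proof is the reduction from the proof of Lemma \ref{fau}, combined with induction on $\dim W$.
\begin{itemize}
\item Let $\pi:W\to W/U$ and $K_w=\{x\in U: S_wx\in U\}$. Every $S_w$-invariant subspace lies in $K_w$, so $S_w$ is simple iff $S_w|_{K_w}:K_w\to U$ is.
\item Put $R'=\ker(\pi\circ S)\subseteq R$ and $\sigma=r-\dim R'$. If $\pi(w)\notin\pi S(R)$, then $K_w=R'$ and $S_w|_{K_w}=S|_{R'}$ is simple.
\item If $\sigma=0$, the remaining $w$ are exactly the $q^{r+1}$ vectors of $U$, and all are bad because $S_w(U)\subseteq U$.
\item If $\sigma\ge 1$, the remaining $w$ fill the $q^\sigma$ cosets $-Su_0+U$, with $u_0$ running over representatives of $R/R'$. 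On such a coset, $K_w=R'\oplus F(v+u_0)$, and $S_w|_{K_w}$ is the extension of the simple map $S|_{R'}:R'\to U$ sending $v+u_0\mapsto w+Su_0$. The induction hypothesis in the ambient space $U$ (where $\dim R'=r-\sigma\le\dim U-2$) gives exactly $q^{r-\sigma+1}$ bad $w$ per coset, hence $q^{r+1}$ in total.
\end{itemize}
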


\begin{lem}(\cite[Lemma $2.7$]{Ra})\label{ram2}
  Every simple map on a hyperplane $H \sub W$ has a unique extension in $\CalC_h$. 
\end{lem}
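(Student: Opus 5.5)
The plan is to compare the restriction map
$$
\rho_H : \CalC_h \to \Set{\trm{simple maps } H \to W}, \qquad B \mapsto B|_H,
$$
with the set of simple maps on $H$. It is enough to show three things: $\rho_H$ is well defined, it is injective, and its source and target have the same finite cardinality. Then $\rho_H$ is a bijection, which is precisely the assertion that each simple map on $H$ has exactly one extension in $\CalC_h$.

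\emph{Well-definedness.} This is the argument already given in the proof of Lemma \ref{simplemapsnumber}. Let $0 \neq S' \subseteq H$ with $B(S') \subseteq S'$. Then $S'$ is a proper nonzero $B$-invariant subspace of $W$. Hence $\chi_B = h$ would factor nontrivially, contradicting the irreducibility of $h$. So $B|_H$ is simple.

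\emph{Injectivity.} Let $B, B' \in \CalC_h$ with $B|_H = B'|_H$. Fix a functional $\varphi \in W^*$ with $\ker \varphi = H$. Then $B' = B + b\varphi$ for some $b \in W$. By the matrix determinant lemma, over the rational function field $F(t)$,
$$
\chi_{B'}(t) = \det(tI - B - b\varphi) = \chi_B(t)\bigl(1 - \varphi\,(tI-B)^{-1} b\bigr).
$$
Since $\chi_{B'} = \chi_B = h$, we get $\varphi\,(tI-B)^{-1}b = 0$. Expanding in powers of $t^{-1}$ gives $\varphi(B^k b) = 0$ for all $k \geq 0$. Thus the cyclic subspace $F[B]b$ is a $B$-invariant subspace contained in $H$. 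Since $\chi_B = h$ is irreducible, the only proper $B$-invariant subspace is $0$. Hence $b = 0$ and $B = B'$.

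\emph{Counting.} The number of simple maps $H \to W$ is obtained by iterating Lemma \ref{ram1} along a flag
$$
0 = R_0 \subsetneq R_1 \subsetneq \cdots \subsetneq R_{m-1} = H.
$$
The empty map on $R_0 = 0$ is simple. Passing from $R_r$ to $R_{r+1}$, for $0 \leq r \leq m-2$, multiplies the count by $q^m - q^{r+1}$. The total is therefore
$$
\prod_{r=0}^{m-2}(q^m - q^{r+1}) = \prod_{j=1}^{m-1}(q^m - q^j).
$$
On the other side, $\CalC_h$ is a single $GL(W)$-conjugacy class. Its centralizer is $F[B]^\times \cong \F_{q^m}^\times$, so
$$
\#\CalC_h = \frac{\#GL_m(q)}{q^m-1} = \prod_{j=1}^{m-1}(q^m - q^j),
$$
exactly as in Lemma \ref{simplemapsnumber}. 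The two counts agree, so the injective map $\rho_H$ is a bijection, and the lemma follows.

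\emph{Main obstacle.} The delicate point is ensuring the counting step is not circular. Lemma \ref{simplemapsnumber} itself invoked the very result being proved, so its count cannot be used here. The count must come only from Lemma \ref{ram1}, whose proof is independent of the present lemma. If that independence were in doubt, I would fall back on a direct argument for existence. In coordinates adapted to $H \oplus Fw$, the characteristic polynomial of the extension with $B(w) = b$ depends affinely on $b \in W \cong F^m$. The injectivity argument above shows the linear part is injective, hence a bijection onto the $m$-dimensional affine space of monic degree-$m$ polynomials. So exactly one $b$ yields $h$, giving existence and uniqueness simultaneously.
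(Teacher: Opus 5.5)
The paper gives no proof of this lemma; it simply quotes \cite[Lemma 2.7]{Ra}. The paper also uses that result in a more general form, counting extensions from subspaces of arbitrary codimension, in Lemmas \ref{simplemapsnumber} and \ref{fau}. So you are supplying an argument where the paper has only a citation.

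Your main route is correct. The restriction map is well defined, and the matrix determinant lemma argument gives injectivity. Iterating Lemma \ref{ram1} along a flag gives $\prod_{j=1}^{m-1}(q^m-q^j)=\#\CalC_h$; the count is multiplicative because a simple map on $R_{r+1}$ restricts to a simple map on $R_r$, which is worth stating explicitly. You are right that Lemma \ref{simplemapsnumber} is off limits. Note, though, that relative to the paper this route only trades one imported result for another. Lemma \ref{ram1} is itself quoted from \cite{ARV}, and nothing in this paper tells you whether its proof there avoids Ram's lemma.

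Your fallback is the stronger argument, and I would make it the main proof. It needs no counting and no Lemma \ref{ram1}. It also never uses irreducibility of $h$, so it shows that a simple map on $H$ has exactly one extension with any prescribed monic characteristic polynomial of degree $m$.

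One step there needs rewording. Your injectivity argument concludes $b=0$ from irreducibility of $\chi_B$. In the fallback, however, the base point is an arbitrary extension $B_c$ of $S$, whose characteristic polynomial you know nothing about; the existence of an extension in $\CalC_h$ is exactly what you are proving. Use simplicity of $S$ instead:
\begin{itemize}
\item If $\chi_{B_c+b\varphi}=\chi_{B_c}$, the matrix determinant lemma gives $\varphi(B_c^k b)=0$ for all $k\geq 0$.
\item Hence $F[B_c]b\subseteq H$, and this subspace is $B_c$-invariant.
\item Since $B_c$ agrees with $S$ on $H$, the subspace is $S$-invariant, and so it is zero by simplicity of $S$.
\end{itemize}
With that change, the linear part of the affine map $b\mapsto\chi_{B_b}$ is an injective linear map from $W$ to the $m$-dimensional space of polynomials of degree less than $m$. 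It is therefore a bijection, and the fallback becomes a complete, self-contained proof.
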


First we prove a preliminary result which is required to deal with the exceptional case $q = 2$; the reason why $q = 2$ is exceptional will become evident in the proof of Proposition \ref{mainprop} below . Suppose we have a simple map $S : R \to W$, and let $H$ be a hyperplane in $W$ containing $R$. The goal is to extend $S$ to $H$. If $H = R \oplus \F_2 v$, for some $v \in H$, then we only need to decide $S_w(v) = w \in W$. Such a choice $w$ is ``good'' if the resulting extension remains simple, otherwise it is ``bad''. The following lemma tells us that if $S$ already sends some vector of $R$ outside $H$, then the good choices cannot all lie in any one translate of $H$. 

\begin{lem}\label{binaryfinalextension}
Suppose $q = 2$, and subspaces $R \sub H \sub W$ such that $\dim R = m -2$, $\dim H = m-1$, and $\dim W = m$. Let $S : R \to W$ be simple, and suppose $S(R) \not \subseteq H$. Write $H = R \oplus \F_2 v$. For any $a \in W$, there is a simple extension $\widetilde{S} : H \to W$ such that $\widetilde{S} (v) \not \in a + H$.
\end{lem}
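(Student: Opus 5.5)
The plan is to combine the exact count from Lemma \ref{ram1} with two explicit ``bad'' values of $\widetilde{S}(v)$ that lie in different cosets of $H$. First the count. For $w \in W$ write $S_w : H \to W$ for the extension of $S$ with $S_w(v) = w$. Apply Lemma \ref{ram1} with $r = m-2$ (so $R \subsetneq H \subsetneq W$, $\dim H = r+1$). Exactly $q^{r+1} = 2^{m-1}$ values of $w$ are bad, i.e.\ give nonsimple $S_w$. Hence the set $G$ of good values also has exactly $2^m - 2^{m-1} = 2^{m-1}$ elements.

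Over $\F_2$ the hyperplane $H$ has exactly two cosets in $W$, namely $H$ and $W \setminus H$, each of size $2^{m-1}$. Thus $a + H$ is one of these two cosets, and $|G| = |a+H|$. So the required $\widetilde{S}$ exists unless $G = a + H$. In that case the bad set $W \setminus G$ would be exactly the other coset. It therefore suffices to show that the bad set meets both $H$ and $W \setminus H$.

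Next I exhibit the two bad values using invariant lines. The choice $w = v$ is bad, since $S_v$ maps the line $\F_2 v \subseteq H$ into itself. This bad value lies in $H$. For the second one, use the hypothesis $S(R) \not\subseteq H$ to pick $\rho \in R$ with $S(\rho) \notin H$, and set $w = \rho + v + S(\rho)$. Then $S_w(\rho + v) = S(\rho) + w = \rho + v$, so the nonzero line $\F_2(\rho+v) \subseteq H$ is invariant and $w$ is bad. Moreover $\rho + v \in H$ while $S(\rho) \notin H$, so $w \notin H$. The bad set therefore contains an element of each coset, so it is not a coset of $H$. Consequently $G \neq a+H$, and since $|G| = |a+H|$ this forces $G \not\subseteq a+H$. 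Any $w \in G \setminus (a+H)$ gives the desired simple extension $\widetilde{S} = S_w$.

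The hypotheses are consistent: if $m = 2$ then $R = 0$ and $S(R) \not\subseteq H$ is impossible, so implicitly $m \geq 3$ and $\rho$ exists. The main point to get right is the counting step, namely that Lemma \ref{ram1} applies to the pair $R \subsetneq H$ inside $W$ with $r = m-2 \leq m-2$. This is what makes $|G|$ equal to the size of a coset, so that ``not all good values lie in $a+H$'' reduces to ``the bad set is not a coset''. The rest is the explicit construction above.
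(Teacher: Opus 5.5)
Your proof is correct. It shares the paper's framework: the count $|G| = 2^{m-1}$ from Lemma~\ref{ram1}, the reduction to ruling out $G = a+H$, and even the same bad values $v$ and $\rho+v+S(\rho)$. It reaches the contradiction by a shorter route, however.

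The paper proves the stronger claim that the good set is not an affine hyperplane in any direction. Since $0$ is bad, the bad set $D$ would then be a \emph{linear} hyperplane. It contains $v$, $S(r)$ and $S(r)+r+v$ for every $r \in R$, so it contains $R$ and $v$, hence $H$, hence equals $H$. This forces $S(R)\subseteq H$, a contradiction.

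You only rule out cosets of the particular hyperplane $H$. That is all the lemma's conclusion requires, and all its application in Proposition~\ref{mainprop} uses (with $a = Av_{m-2}$). For this it suffices to exhibit one bad value in $H$ and one outside $H$. So you need neither the linearity step nor the bad values $S(r)$. Your version also makes the role of the hypothesis $S(R)\not\subseteq H$ transparent: it is exactly what places the bad value $\rho+v+S(\rho)$ outside $H$.

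The paper's route yields a little more information than is used later. Yours is the more economical argument for the stated lemma.
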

\begin{proof}
For $x \in W$, let $S_x$ extend $S$ by $S_x(v) = x$, and put 
$$
E = \Set{ x \in W : S_x \trm{ is simple}}, \qquad D = W \setminus E.
$$
By Lemma \ref{ram1}, $\abs{E} = 2^m -2^{m-1} = 2^{m-1}$. We show that $E$ cannot be an entire affine hyperplane. Suppose not. Over $\F_2$, its complement $D$ is also an affine hyperplane. Since $S_0(v) = 0$ by definition, $S_0$ is not simple, hence $0 \in D$ and so $D$ is a \tit{linear} hyperplane. On the other hand, note that $v , S(r), S(r) + r + v \in D$ for all $r \in R$. Indeed, $S_v(v) = v$ so that $S_v$ is not simple, hence $v \not \in E$. Similarly, since $v + r \neq 0$ because $v \not \in R$, we have  $S_{S(r)}(v + r) = S(r) + S(r) = 0$ (we are in characteristic $2$), and $S_{S(r) + r + v} (v + r) = v+r$, so that $S_{S(r)}$ and $S_{S(r)+r + v}$	 cannot be simple. Now, since $D$ is linear, it also contains $r = S(r) + (S(r) + r + v) + v$, so that $R \sub D$, and since $v \in D$ it follows that $H \sub D$. Since both $H$ and $D$ are hyperplanes, $H = D$. But $S(r) \in D$ for every $r \in R$, so that $S(R) \subseteq H$, a contradiction. 

Therefore $E$ is not an affine hyperplane. If $E \subseteq a + H$, then since $\abs{E} = 2^{m-1}$ and $\abs{a + H} = 2^{m-1}$ it follows that $E = a + H$ and $E$ is a hyperplane, again a contradiction. Thus, for every $a \in W$, some ``good'' image lies outside every $a + H$.
\end{proof}

We now establish the ``essentially'' main result of this article. 
\begin{prop}\label{mainprop}
Let $m \geq 1$, $W$ be an $m$-dimensional vector space over $\F_q$,  $A \in \End_{\F_q}(W)$, and $0 \neq c \in W$. Suppose 
$$
m = 1,\qquad  \trm{ or } \quad q > 2, \qquad \trm{ or } \quad m \geq 4.
$$
For every monic irreducible $h$ of degree $m$, there exists $B \in \CalC_h$ such that $c$ is cyclic for $A - B$.
\end{prop}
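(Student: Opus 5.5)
The plan is to build a basis $v_0 = c, v_1, \ldots, v_{m-1}$ of $W$ together with a simple map on the hyperplane $H = \mathrm{span}(v_0,\ldots,v_{m-2})$ given by
$$
S(v_i) := A v_i - v_{i+1}, \qquad 0 \le i \le m-2 .
$$
Lemma \ref{ram2} then gives a unique $B \in \CalC_h$ extending $S$. Since $(A-B)v_i = Av_i - S(v_i) = v_{i+1}$ for $i \le m-2$, induction gives $(A-B)^j c = v_j$ for $0 \le j \le m-1$, so $c$ is cyclic for $A-B$. The case $m = 1$ is immediate: $B$ is the root of $h$ and $c$ spans $W$.

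\textbf{Recursive step.} Suppose $v_0,\ldots,v_{i-1}$ are independent and $S$ is already defined and simple on $R = \mathrm{span}(v_0,\ldots,v_{i-2})$; for $i=1$, $R = 0$ and $S$ is trivially simple. Choosing $v_i$ is the same as choosing the value $w = S(v_{i-1}) = Av_{i-1} - v_i$, since $v_i = Av_{i-1} - w$. Two kinds of choice are forbidden.
\begin{enumerate}
\item Values making the extension to $\mathrm{span}(v_0,\ldots,v_{i-1})$ nonsimple. By Lemma \ref{ram1} with $\dim R = i-1 \le m-2$, there are exactly $q^{i}$ such values.
\item Values with $v_i \in \mathrm{span}(v_0,\ldots,v_{i-1})$, which form an affine translate of that span and so number $q^{i}$.
\end{enumerate}
Since $i \le m-1$, at most $2q^{i} \le 2q^{m-1}$ of the $q^m$ values are excluded. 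This is fewer than $q^m$ when $q > 2$, which settles that case.

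\textbf{The case $q = 2$, $m \ge 4$.} The same count works at every step $i \le m-2$, because $2\cdot 2^{i} < 2^m$ there. Only the last step $i = m-1$ can fail, and I would handle it with Lemma \ref{binaryfinalextension}, taking $R = \mathrm{span}(v_0,\ldots,v_{m-3})$, $H = R \oplus \F_2 v_{m-2}$ and $a = Av_{m-2}$. The lemma produces a simple extension with $S(v_{m-2}) \notin Av_{m-2} + H$, which is exactly the condition $v_{m-1} = Av_{m-2} - S(v_{m-2}) \notin H$. Its hypothesis $S(R) \not\subseteq H$ has to be arranged during the earlier steps.

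\textbf{Maintaining $S(R) \not\subseteq H$.} I would carry the following invariant: after step $i$, some already defined value $w_0 = S(v_j)$ lies outside $U_i = \mathrm{span}(v_0,\ldots,v_i)$. To preserve it at step $i+1 \le m-2$, also forbid $v_{i+1} \in \mathrm{span}(U_i, w_0)$. By the exchange argument this keeps $w_0 \notin U_{i+1}$. This set has $2^{i+2}$ elements and contains the second forbidden set above, so at most $2^{i+2} + 2^{i+1} = 3\cdot 2^{i+1} \le 3 \cdot 2^{m-2} < 2^m$ values are excluded. At the end, $w_0 \in S(R)$ and $w_0 \notin U_{m-2} = H$, as needed.

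\textbf{Main obstacle: starting the invariant.} Note that $S(v_j) \notin \mathrm{span}(v_0,\ldots,v_{j+1})$ is equivalent to $Av_j \notin \mathrm{span}(v_0,\ldots,v_{j+1})$. If $Av_0 \notin \F_2 c$, I would choose $w = S(v_0)$ outside $\mathrm{span}(c, Av_0)$, which excludes at most $4 + 2 < 2^m$ values. If $c$ is an eigenvector of $A$, I would instead try to start the invariant at a later index $j$, chosen so that $Av_j$ escapes the current span. This fails when $A$ is scalar, but then the statement is direct: $A - B = \alpha I - B$ has irreducible characteristic polynomial, so every nonzero $c$ is cyclic. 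The delicate part, which I expect to take the real work, is the intermediate case where $A$ is nonscalar but $A$ maps the early spans into themselves. There I would use the freedom in choosing $v_1, v_2, \ldots$, together with the spare room $m \ge 4$, to force $Av_j \notin U_{j+1}$ for some $j \le m-3$.
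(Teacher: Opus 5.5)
Outside one case your argument matches the paper's. You make the same recursive choice of $v_i$ through the value $S(v_{i-1})$ and use the same count for $q>2$. For $q=2$ you use the same reserved vector feeding Lemma~\ref{binaryfinalextension}: your $w_0=S(v_j)$ lies outside the span iff the paper's $\omega=Av_j$ does. Your start when $Ac\notin\F_2c$ is the paper's first case. The gap is the case $Ac\in\F_2c$ with $A$ nonscalar. You leave it as a plan, and on part of it the plan provably fails.

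Split this case by the operator $\bar A$ induced on $W/\F_2c$, not by whether $A$ is scalar. If $\bar A$ is scalar, then $A=aI+c\phi$ with $a\in\F_2$, $\phi\in W^*$, and this $A$ is nonscalar as soon as $\phi\neq 0$. For such $A$, $Av=av+\phi(v)c\in\mathrm{span}(c,v)$ for every $v$. So $S(v_j)=Av_j-v_{j+1}\in U_{j+1}$ however the $v_i$ are chosen, and hence $S(R)\subseteq H$ always. Your invariant can therefore never be started, and Lemma~\ref{binaryfinalextension} never applies. The bare count at the last step, with up to $2^{m-1}+2^{m-1}=2^m$ exclusions, proves nothing.

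This subcase needs a separate argument, and the one you gave for scalar $A$ extends. For any $B\in\CalC_h$, put $T=A-B$. Then $T(B^ic)=aB^ic-B^{i+1}c+\phi(B^ic)\,c$. By induction, $\mathrm{span}(c,Tc,\dots,T^jc)=\mathrm{span}(c,Bc,\dots,B^jc)$. Since $h$ is irreducible, $c$ is cyclic for $B$, hence for $T$, so every $B\in\CalC_h$ works.

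If $\bar A$ is nonscalar, your idea does go through, but it must actually be carried out, and this is where $m\ge 4$ enters. Write $Ac=\alpha c$.
\begin{enumerate}
\item Pick $v_1$ with $Av_1\notin\mathrm{span}(c,v_1)$. Such $v_1$ exists precisely because $\bar A$ is nonscalar. It forces $v_1\notin\F_2c$, so $S(c)=\alpha c-v_1\notin\F_2c$.
\item Pick $v_2\notin\mathrm{span}(c,v_1,Av_1)$, which is possible since $m\ge 4$. Then $w_0=S(v_1)=Av_1-v_2\notin U_2$.
\item Check that $S$ is simple on $U_1=\mathrm{span}(c,v_1)$. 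The map $U_1\to W/U_1$ induced by $S$ has kernel exactly $\F_2c$. So any $S$-stable subspace of $U_1$ lies in $\F_2c$, and $\F_2c$ is not $S$-stable.
\end{enumerate}
Since $m\ge 4$, $v_1\in U_{m-3}=R$, so $w_0\in S(R)$ as Lemma~\ref{binaryfinalextension} requires. Your maintenance step then runs for $i=2,\dots,m-3$ (no steps when $m=4$), and Lemma~\ref{binaryfinalextension} finishes the last step, exactly as in the paper's second case.
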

\begin{proof}
If $m =1$, every nonzero vector is cyclic for every operator, in particular for $A - B$. 

For $m \geq 2$, we seek a basis $(v_0 = c, v_1, \ldots, v_{m-1})$ of $W$ such that the maps
$$
S_i : R_{i+1} \to W, \qquad S_i(v_j) = Av_j - v_{j+1} \quad (0 \leq j \leq i),
$$
are simple for $0 \leq i \leq m-2$, where $R_0 = 0$ and $R_j = \trm{span}(v_0, \ldots, v_{j-1})$. Each $S_i$ extends $S_{i-1}$, with $S_{-1} :\Set{0} \to W$. In particular, $S_{m-2}$ is simple on the hyperplane $H = R_{m-1}$. By Lemma \ref{ram2}, it extends uniquely to $B \in \CalC_h$. Then $(A-B)v_i = v_{i+1}$, for $0 \leq i  \leq m-2$, so that $c$ is cyclic for $A - B$.

For $q > 2$, begin with $v_0 = c$ and the simple zero map $S_{-1} : \Set{0} \to W$ on the zero subspace.  First, we choose $v_1$ (to be decided later) and then assign the image of $v_0$. For any candidate $v_1 \in W$, define $S_0(v_0) = A v_0 - v_1$. This determines $S_0$ on the subspace generated by $v_0$. Now we choose $v_1$ so that $v_1 \not \in \trm{span}(v_0)$ and such that $S_0$ is simple. These conditions can be checked for each candidate $v_1 \in W$. In particular, at this step, simplicity means $Av_0 - v_1 \not \in \trm{span}(v_0)$. At the end of this step, $v_1$ is known, but $S_0(v_1)$ \tit{has not been defined}, since the domain of $S_0$ is still the subspace generated by $v_0 = c$. Now, by induction on $r$, we want to extend $S_{r-1}$ on $R_{r} := \trm{span}(v_0, \ldots, v_{r-1})$ to a map $S_{r}$ on $R_{r+1} = \trm{span}(v_0, \ldots, v_r)$, so suppose, for $r \geq 1$, $v_0, \ldots, v_{r-1},v_r$ have been chosen so that the ordered set $(v_0, \ldots, v_{r-1},v_r)$ is linearly independent and $S_{r-1}$ is simple on $R_r := \trm{span}(v_0, \ldots, v_{r-1})$. Choosing $v_{r+1}$ prescribes $S_r(v_r) = A v_r - v_{r+1}$, hence defines an extension of $S_{r-1}$ to $S_r$ by defining on $v_r$. But, by Lemma \ref{ram1}, exactly $q^{r+1}$ choices in $W$ (of $v_{r+1}$) produce a \tit{nonsimple} extension, and furthermore also $q^{r+1}$ choices lie in $R_{r+1}$ making the new vectors dependent; these sets may overlap. Hence at least 
$$
q^m -2 q^{r+1} \geq q^{m-1}(q-2) > 0
$$
choices work for every $0 \leq r \leq m-2$. Thus, finite induction constructs the desired basis and a simple map $S_{m-2}$ on $H$. By Lemma \ref{ram2}, this simple map extends to a unique $B \in \CalC_h$ satisfying the desired properties. 

Now suppose $q = 2$ and $m \geq 4$. The preceding estimate can become zero, so we take care by keeping a reserved vector $\omega$ outside the chosen span at each induction step. We first describe the induction step, and then the initial choices that must be made to start the induction. Suppose $v_0, \ldots, v_{r-1},v_r$ are independent,
$$
\omega \not \in R_{r+1} = \trm{span}(v_0, \ldots, v_{r-1},v_r),
$$
and $S_{r-1} : R_r \to W$ is simple. In choosing $v_{r+1}$, we want to avoid the nonsimple choices and also the set $R_{r+1} + \F_2 \omega$, having sizes $2^{r+1}$ and $2^{r+2}$ respectively. Indeed, the set $R_{r+1}+ \F_2 \omega$ is the disjoint union
$$
R_{r+1} + \F_2 \omega = R_{r+1} \sqcup (\omega + R_{r+1})
$$
If we choose $v_{r+1} \in R_{r+1}$, the enlarged set of vectors is linearly dependent. Moreover, if $v_{r+1} \in \omega + R_{r+1}$, so that $v_{r+1} = \omega  + u$ for some $u \in R_{r+1}$, then $\omega = u + v_{r+1}$. Hence $\omega$ belongs to $\trm{span}(v_0, \ldots, v_r, v_{r+1})$. Thus avoiding $R_{r+1} + \F_2 \omega$ for the choice of $v_{r+1}$ keeps the chosen vectors independent and keeps $\omega$ outside their span. Put together,  for $r \leq m-3$, since
$$
2^{r+1} + 2^{r+2} \leq 3 \cdot 2^{m-2} < 2^m	
$$ 
an allowable choice always exists. We make the initial choices needed to begin the construction. We split into three cases. 
\begin{enumerate}
  \item 
  Suppose $Ac \not \in \F_2 c$. Put $\omega = Ac$, and choose $v_1 \not \in \trm{span}(c, \omega)$. For $R_1 = \F_2 c$, defining $S_0(c) = \omega + v_1$ gives a simple map $S_0 : R_1 \to W$, and $\omega \not \in \trm{span}(c,v_1)$. Applying the construction above for $r = 1, \ldots, m-3$, we get at the end 
  $$
  R_{m-2} = \trm{span}(v_0, \ldots, v_{m-3}), \qquad H = R_{m-1} = \trm{span}(v_0, \ldots, v_{m-2})
  $$
  with $S_{m-3} : R_{m-2} \to W$ simple and $\omega \not \in H$. Since $v_1 \in H$, we have $S_{m-3}(c) = \omega + v_1 \not \in H$, and hence $S_{m-3}(R_{m-2}) \not \subseteq H$. Applying Lemma \ref{binaryfinalextension} with $v = v_{m-2}$, and $a = A v_{m-2}$, we obtain a simple extension $S_{m-2} : H \to W$ satisfying $S_{m-2}(v_{m-2}) \not \in A v_{m-2} + H$. Set
  $$
  v_{m-1} := Av_{m-2} + S_{m-2}(v_{m-2}).
  $$
  Then $v_{m-1} \not \in H$, and $(v_0, \ldots, v_{m-1})$ is a basis for $W$ and $S_{m-2}$ satisfies $S_{m-2}(v_i) = Av_i - v_{i+1}$ for $0 \leq i \leq m-2$. Now, applying Lemma \ref{ram2} we obtain a unique extension of $S_{m-2}$ to a $B \in \CalC_h$.
  \item
  Suppose $Ac \in \F_2c$, and the induced operator on  $W/\F_2c$ is nonscalar. This case starts with a two-dimensional map. Let $Ac = \alpha c$, where $\alpha \in \F_2$. Choose $v_1$ such that $Av_1 \not \in \trm{span}(c,v_1)$, and put $\omega = A v_1$; such a vector exists because the induced operator on $W/\F_2 c$ is nonscalar. Since $m \geq 4$ (here we use the hypothesis that $m \geq 4$), choose $v_2 \not \in \trm{span}(c,v_1, \omega)$. Define $S_1 : R_2 \to W$ by
  $$
  S_1(c) = \alpha c + v_1, \qquad S_1(v_1) = \omega + v_2.
  $$
  This map on $U = \trm{span}(c,v_1)$ is simple. Indeed, the kernel of $U \xrightarrow{S_1} W \to W/U$ is precisely $\F_2c$. Every invariant subspace in $U$ must therefore lie in $\F_2 c$, while that line is not invariant because $S_1(c) \not \in \F_2 c$.
  
  Also $\omega \not \in \trm{span}(c,v_1, v_2)$.  If $m > 4$, apply the above induction for $r = 2, \ldots, m-3$; if $m =4$, no induction step is needed. In both cases, put 
  $$
  R_{m-2} = \trm{span}(v_0, \ldots, v_{m-3}), \qquad H = \trm{span}(v_0, \ldots, v_{m-2})
  $$
  Then $S_{m-3} : R_{m-2} \to W$ is simple, $\omega \not \in H$, and $v_1 \in R_{m-2}$. Since $v_2 \in H$, $S_{m-3}(v_1) = \omega + v_2 \not \in H$. Thus $S_{m-3}(R_{m-2}) \not \subseteq H$. Now Lemma \ref{binaryfinalextension} completes the construction as in the first case.

  \item
  Suppose $Ac \in \F_2 c$, and the induced operator on $W/\F_2 c$ is scalar. Then $A = a I + c \phi$ for some $a \in \F_2$ and $\phi \in W^*$. Indeed, since the induced operator on $W/\F_2c$ is scalar, $(A- aI)W \subseteq \F_2 c$, so that $A = aI + c \phi$ for some $\phi \in W^*$. Choose any $B \in \CalC_h$. Irreducibility of $h$ implies that every nonzero vector is cyclic for $B$, because the subspace generated by $B^i c$, with $i \geq 0$, is a nonzero  invariant subspace, which must equal $W$ since $h$ is irreducible.  Put 
  $$
  T = A - B = a I + B + c \phi.
  $$
  For every $j \geq 1$, induction gives
  $$
  T^j c - B^j c \in \trm{span}(c, Bc, \ldots, B^{j-1}c).
  $$
  Indeed, $T (B^ic) = B^{i+1}c + a B^i c + \phi(B^ic)c$. Consequently, $\trm{span}(c, Tc, \ldots, T^j c) = \trm{span}(c, Bc, \ldots, B^jc)$. Taking $j = m-1$ proves that $c$ is cyclic for $A-B$. These three cases exhaust all possibilities and complete the proof. 

\end{enumerate} 
\end{proof}

\begin{thm}\label{x+ythm}
Let $n \geq 2$. Let $X$ be a conjugacy class in $M_n(\F_q)$ whose characteristic polynomial is irreducible of degree $n$, and let $Y$ be a conjugacy class in $M_n(\F_q)$ whose characteristic polynomial is of the form $(t- \lam) h(t)$ where $h$ is irreducible of degree $n-1$ and $h(\lam) \neq 0$. Suppose $q >2$, or $n = 2$, or $n \geq 5$. Then for every nonscalar $z$ satisfying $\tr(z) = \tr(X) + \tr(Y)$ there exists $x \in X$ and $y \in Y$ such that $z = x + y$.
\end{thm}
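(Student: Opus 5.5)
The plan is to reduce Theorem \ref{x+ythm} to Proposition \ref{mainprop} through the fibration $\pi_z$ described at the start of \S\ref{adddecomp}. Set $m = n-1$. The hypotheses translate directly: $q>2$ stays $q>2$, $n=2$ becomes $m=1$, and $n\geq 5$ becomes $m\geq 4$. These are exactly the hypotheses of Proposition \ref{mainprop}.

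First, since $z$ is nonscalar, I will exhibit a line $L \in \P(V)$ that is not $z$-stable. If every line were $z$-stable, every nonzero vector would be an eigenvector, which forces $z$ to be scalar. Alternatively one can cite the estimate $\abs{G_z - q^{n-1}} \leq 6q^{n-2}$, but the elementary argument suffices and avoids any size condition. Fix such an $L$ and a complement $W$, so that $V = L \oplus W$, and write
$$
z = \begin{pmatrix} a & b \\ c & A \end{pmatrix}
$$
with respect to this decomposition. Because $L$ is not $z$-invariant, $c \neq 0$.

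Second, I apply Proposition \ref{mainprop} to $W$, $A$, $c$, and the given irreducible $h$ of degree $m$. This produces $B \in \CalC_h$ such that $c$ is cyclic for $A-B$. By \cite[Proposition $4.6$]{KM}, recalled at the beginning of the existence subsection, there is then a (unique) $r \in W^*$ with $y := y_{B,r} = \begin{pmatrix} \lambda & r \\ 0 & B\end{pmatrix} \in Y$ and $x := z - y \in X$. The trace condition $\tr(z) = \tr(X)+\tr(Y)$ is the hypothesis under which that cited characterization applies, so I will check that it is used there. The fact that $y \in Y$ follows because its characteristic polynomial is $(t-\lambda)h(t)$ with $h(\lambda)\neq 0$, and this class is determined by its characteristic polynomial, as recalled via \cite[Lemma 4.1]{KM}. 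Hence $z = x+y$, as required.

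I expect no real obstacle, since all the work is in Proposition \ref{mainprop} and the \cite{KM} characterization. The only points needing care are bookkeeping: that the passage $n \mapsto m = n-1$ matches the case split exactly, and that the non-$z$-stable line exists for every nonscalar $z$, including over $\F_2$. For $q>2$ one could alternatively invoke Proposition \ref{x+yprop} directly.
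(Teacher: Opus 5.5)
Your proposal is correct and matches the paper's argument: take a non-$z$-stable line $L$ (so $c\neq 0$), apply Proposition \ref{mainprop} (whose hypotheses, with $m=n-1$, are exactly $q>2$, $m=1$, or $m\geq 4$) to get $B\in\CalC_h$ with $c$ cyclic for $A-B$, and conclude via \cite[Proposition $4.6$]{KM} as in the discussion preceding Lemma \ref{ram1}. Your elementary argument that a nonscalar $z$ has a non-stable line is the right one to use, since the quoted estimate $\abs{G_z - q^{n-1}} \leq 6q^{n-2}$ by itself guarantees such a line only for $q>6$.
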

\begin{proof}
This follows from Proposition \ref{mainprop} and the discussion before Lemma \ref{ram1}.
\end{proof}

\section{Additive Characters and Gauss Sums}\label{addchar}

First, we prove a finite-field counting lemma, before proving the main result of this section. Let $d \geq 1$,  $Q := q^d$, and $e = \gcd(k,Q-1)$. For $t \in \F_q$, let 
$$
R_t := \# \Set{u \in \F_Q : \tr_{\F_Q/\F_q}(u^e) = t}.
$$

\begin{lem}\label{mulchar}
Let $g$ be a generator of $\F_Q^{\times}$ and $\chi: \F_Q^{\times} \to \C^{\times}$ be the multiplicative character defined by 
$$
\chi(g^j) := \exp \left( \frac{2\pi j}{e} \cdot i \right)
$$
For $v \neq 0$,
$$
\sum_{j = 0}^{e-1} \chi(v)^j = \begin{cases} e & \trm{ if } v \trm{ is an } e\trm{th power} \\ 0 & \trm{otherwise}.\end{cases}
$$
Moreover, $\chi(v) = 1$ if and only if $v$ is an $e$th power. Furthermore, $\sum_{j =0}^{e-1} \chi(v)^j$ is equal to the number of solutions of the equations $x^e = v$.
\end{lem}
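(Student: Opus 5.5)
The plan is to work entirely inside the cyclic group $\F_Q^{\times}$, using the generator $g$ to turn everything into arithmetic of exponents modulo $Q-1$. Note first that $\chi$ is well defined: $g$ has order $Q-1$ and $e \mid Q-1$, so $j \mapsto \exp(2\pi i j/e)$ depends only on $j$ modulo $Q-1$. It is also a homomorphism, since $\chi(g^{j}g^{j'}) = \exp(2\pi i (j+j')/e) = \chi(g^j)\chi(g^{j'})$, and its image is exactly the group $\mu_e$ of $e$th roots of unity.

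For the ``moreover'' statement, write $v = g^j$. Then $\chi(v)=1$ exactly when $e \mid j$. If $e\mid j$, say $j = e\ell$, then $v = (g^{\ell})^e$ is an $e$th power. Conversely, if $v = w^e$ with $w = g^{\ell}$, then $j \equiv e\ell \pmod{Q-1}$. Since $e \mid Q-1$, this congruence forces $e \mid j$.

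The first displayed identity is then the usual geometric series. Put $\zeta = \chi(v)$, which is an $e$th root of unity. If $\zeta = 1$, the sum is $e$. If $\zeta \neq 1$, the sum equals $(\zeta^e - 1)/(\zeta - 1) = 0$.

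The last claim, and the only real step, is to count solutions of $x^e = v$ and show the count is $e$ when $v$ is an $e$th power and $0$ otherwise. Any solution $x$ is automatically nonzero because $v \neq 0$. The map $x \mapsto x^e$ is an endomorphism of the cyclic group $\F_Q^{\times}$. Its kernel is $\{x : x^e = 1\}$, which has exactly $e$ elements: in a cyclic group of order $Q-1$ with $e \mid Q-1$, the elements of order dividing $e$ are the powers of $g^{(Q-1)/e}$. So each $v$ in the image has exactly $e$ preimages, namely one solution times the kernel, and each $v$ outside the image has none. Comparing with the first identity gives the claim.

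Nothing here is a genuine obstacle. The one point that needs care is using $e = \gcd(k,Q-1)$ only through the divisibility $e \mid Q-1$, both for the well-definedness of $\chi$ and for the size of the kernel.
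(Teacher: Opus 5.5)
Your proof is correct and follows essentially the same route as the paper: well-definedness of $\chi$ via $e \mid Q-1$, the criterion $e \mid j$ for $g^j$ to be an $e$th power, the geometric-series evaluation, and the count of solutions via the kernel of $x \mapsto x^e$ having exactly $e$ elements. Your identification of that kernel as the powers of $g^{(Q-1)/e}$, and your remark that solutions are automatically nonzero, only make explicit details the paper leaves implicit.
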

\begin{proof}
Fix a generator $g$, and put $\zeta := \exp( \frac{2 \pi}{e} i )$. Define $\chi(g^r) = \zeta^r$. This is well-defined: if $g^r = g^s$, then $Q-1 \mid r-s$, and hence $e \mid r -s$, so $\zeta^r = \zeta^s$. Since $\chi(g) = \zeta$, the character $\chi$ has order $e$.

Let $v := g^r$. Then $v$ is an $e$-th power if and only if $e \mid r$. Indeed, if $v = u^e$ for some $u \in \F_Q^{\times}$, then $u = g^s$ for some $s \in \Z$, so that $g^r = g^{es}$, hence $r \equiv es \pmod{Q-1}$. Since $e \mid Q-1$, it follows that $e \mid r$. The converse is clear. Because $\zeta$ has order $e$, $\chi(v) = \zeta^r = 1$ if and only if $e \mid r$. Thus $\chi(v) = 1$ if and only if $v$ is an $e$th power. 

Now consider the sum $S = 1 + \chi(v) + \chi(v)^2 + \ldots + \chi(v)^{e-1}$. If $v$ is an $e$th power, then $\chi(v) = 1$, so $S = e$. Otherwise, let $z = \chi(v)$. Then $z \neq 1$, but $z^e = 1$, so that $(z-1)S = (z + z^2 + \ldots z^e) - (1 + z + \ldots + z^{e-1}) = z^e -1 = 0$, hence $S = 0$. 

For the final assertion, consider the homomorphism $\F_Q^{\times} \to \F_Q^{\times}$, defined by $u \mapsto u^e$, which has kernel of size $e$.
\end{proof}

We express the trace condition using additive characters, and then express the $e$th power condition under multiplicative characters. The following result is well-known, but due to lack of proper reference we include a proof for completeness.
\begin{lem}\label{gauss}
Let $\theta : (\F_Q^{\times}, \times) \to \C^\times$ be a nontrivial multiplicative character, and $\Phi : (\F_Q, +) \to \C^{\times}$ be a nontrivial additive character. Put 
$$
G := \sum_{x \in \F_Q^{\times}} \theta(x) \Phi(x).
$$
Then $\abs{G} = \sqrt{Q}$. 
\end{lem}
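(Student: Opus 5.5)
The plan is the classical computation of $\abs{G}^2 = G\overline{G}$ by orthogonality of additive characters. Write
$$
\abs{G}^2 = \sum_{x,y \in \F_Q^{\times}} \theta(x)\overline{\theta(y)}\,\Phi(x)\overline{\Phi(y)} = \sum_{x,y \in \F_Q^{\times}} \theta(xy^{-1})\,\Phi(x-y),
$$
using that the values of $\theta$ and $\Phi$ are roots of unity, so $\overline{\theta(y)} = \theta(y^{-1})$ and $\overline{\Phi(y)} = \Phi(-y)$. Next substitute $x = ty$ with $t \in \F_Q^{\times}$; for fixed $y$ this is a bijection in $x$. This gives
$$
\abs{G}^2 = \sum_{t \in \F_Q^{\times}} \theta(t) \sum_{y \in \F_Q^{\times}} \Phi\bigl((t-1)y\bigr).
$$

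The inner sum is evaluated by the standard orthogonality fact for the nontrivial additive character $\Phi$, namely $\sum_{u \in \F_Q} \Phi(u) = 0$. This holds because choosing $u_0$ with $\Phi(u_0) \neq 1$ and translating by $u_0$ multiplies the sum by $\Phi(u_0)$. Hence if $t \neq 1$, the map $y \mapsto (t-1)y$ permutes $\F_Q^{\times}$, so the inner sum equals $\sum_{u \neq 0}\Phi(u) = -1$. If $t = 1$, the inner sum equals $Q-1$. Therefore
$$
\abs{G}^2 = (Q-1)\theta(1) - \sum_{t \neq 1} \theta(t) = Q - \sum_{t \in \F_Q^{\times}} \theta(t).
$$

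The remaining sum vanishes by the multiplicative analogue of the same argument. Since $\theta$ is nontrivial, pick $t_0$ with $\theta(t_0) \neq 1$; the substitution $t \mapsto t_0 t$ multiplies the sum by $\theta(t_0)$, so the sum is $0$. Hence $\abs{G}^2 = Q$, and taking square roots gives $\abs{G} = \sqrt{Q}$.

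There is no serious obstacle here. The only points needing care are the change of variables $x = ty$, which is legitimate since $y \neq 0$, and the separate treatment of $t = 1$. Note that both hypotheses are used: nontriviality of $\Phi$ gives the $-1$ in the inner sum, and nontriviality of $\theta$ kills the final sum.
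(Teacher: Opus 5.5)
Your proof is correct and follows essentially the same route as the paper: expand $|G|^2$, substitute $x = ty$, evaluate the inner additive sum as $Q-1$ or $-1$ according to whether $t=1$, and finish with $\sum_{t \in \F_Q^{\times}} \theta(t) = 0$ from the nontriviality of $\theta$.
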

\begin{proof}
Indeed, 
\begin{align*}
\abs{G}^2 &= \sum_{x, y \in \F_Q^{\times}} \theta(x) \overline{\theta(y)} \Phi(x) \overline{\Phi(y)} = \sum_{x,y \neq 0} \theta \left( \frac{x}{y} \right)\Phi(x-y) = \sum_{v \neq 0} \theta(v) \cdot \sum_{y \neq 0} \Phi((v-1)y),
\end{align*}
where, in the third equality we used the substitution $x = vy$. When $v = 1$, the inner sum is $Q-1$, and $-1$ otherwise since $\sum_{u \in \F_Q} \Phi(u) = 0$. Therefore 
$$
\abs{G}^2 = (Q- 1) - \sum_{v \neq 0,1} \theta(v).
$$
Since $\theta$ is nontrivial, $\sum_{v \neq 0} \theta(v) = 0$, while $\theta(1) = 1$, so that $\sum_{v \neq 0,1} \theta(v) = -1$. Hence $\abs{G}^2 = Q$, and the desired result follows.

\end{proof}

\begin{prop}\label{rtlemma}
We have 
$$
\abs{R_t - q^{d-1}} \leq (e-1)q^{d/2}.
$$
\end{prop}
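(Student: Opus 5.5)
We plan to count $R_t$ by first replacing $u^e$ with $v$, weighting each $v$ by its number of $e$th roots via Lemma \ref{mulchar}, and then detecting the trace condition with additive characters. Concretely, the $u=0$ term contributes $1$ exactly when $t=0$. For $v\neq 0$, the number of $u$ with $u^e=v$ is $\sum_{j=0}^{e-1}\chi(v)^j$. Hence
$$
R_t=\delta_{t,0}+\sum_{v\in\F_Q^\times,\ \tr(v)=t}\ \sum_{j=0}^{e-1}\chi(v)^j .
$$
The $j=0$ part counts the nonzero $v$ of trace $t$. Since $\tr_{\F_Q/\F_q}$ is surjective and $\F_q$-linear, there are $q^{d-1}$ elements of trace $t$ in $\F_Q$. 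Together with $\delta_{t,0}$ this gives exactly $q^{d-1}$. So
$$
R_t-q^{d-1}=\sum_{j=1}^{e-1}\ \sum_{v\neq0,\ \tr(v)=t}\chi^j(v),
$$
and it suffices to show that each inner sum has absolute value at most $q^{d/2}$ for $1\le j\le e-1$.

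Fix $1\le j\le e-1$. Then $\theta=\chi^j$ is a nontrivial multiplicative character, since $\chi$ has order $e$. We detect the trace condition with additive characters. Let $\psi$ be a nontrivial additive character of $\F_q$. Then
$$
\mathbf 1[\tr(v)=t]=\frac1q\sum_{a\in\F_q}\psi(a(\tr(v)-t)),
$$
so the inner sum equals
$$
\frac1q\sum_{a\in\F_q}\overline{\psi(at)}\sum_{v\ne0}\theta(v)\Phi_a(v),\qquad \Phi_a(v):=\psi(a\,\tr(v)).
$$
For $a=0$ the inner sum is $\sum_{v\neq0}\theta(v)=0$, because $\theta$ is nontrivial. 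For $a\neq0$, the character $\Phi_a$ is a nontrivial additive character of $\F_Q$, again by surjectivity of the trace. By Lemma \ref{gauss}, its Gauss sum with $\theta$ has absolute value $\sqrt Q=q^{d/2}$. The triangle inequality then bounds the inner sum by $\frac1q(q-1)q^{d/2}\le q^{d/2}$. Summing over $j=1,\dots,e-1$ gives $|R_t-q^{d-1}|\le (e-1)q^{d/2}$.

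The only genuinely delicate points are bookkeeping ones. First, the $u=0$ term must be handled correctly so that the main term is exactly $q^{d-1}$. Second, one must check that each $\Phi_a$ with $a\neq 0$ is nontrivial, which is precisely what lets Lemma \ref{gauss} apply. No step should be a real obstacle, since the analytic input is entirely supplied by Lemma \ref{gauss}. In the degenerate case $e=1$ the sum over $j$ is empty, and the identity $R_t=q^{d-1}$ follows directly.
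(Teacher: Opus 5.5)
Your proof is correct, and it is the paper's argument with the two character expansions done in the opposite order. The paper first detects the trace condition with additive characters, taking the main term $q^{d-1}$ from $a=0$, and then expands $\sum_u \Psi(au^e)$ via Lemma \ref{mulchar}. You expand the root count first, taking the main term from $j=0$ together with the $u=0$ term, and then detect the trace; there the $a=0$ contribution vanishes because $\theta$ is nontrivial. Both routes reach the same sum $\frac{1}{q}\sum_{a\neq 0}\psi(-at)\sum_{j=1}^{e-1}\sum_{v\neq 0}\chi^j(v)\psi(a\,\tr(v))$ and the same bound $\frac{q-1}{q}(e-1)q^{d/2}$.
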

\begin{proof}
Let $\psi : (\F_q, +) \to \C^\times$ be a nontrivial additive character, so that $\psi(x+y) = \psi(x) \psi(y)$ for all $x,y \in \F_q$	 and $\psi(x) \neq 1$ for some $ x\in \F_q$. Recall the additive-character orthogonality identity: 
$$
\frac{1}{q} \sum_{a \in \F_q} \psi(av) = \begin{cases} 1, & \trm{ if } v =0 \\ 0, & \trm{ if } v \neq 0. \end{cases}
$$
Applying this identity with $v := \tr(u^e) -t$, we obtain 
\begin{align*}
R_t &= \sum_{u \in \F_Q} \frac{1}{q} \sum_{a \in \F_q} \psi \left( a (\tr(u^e) -t )\right) \\
&= \frac{1}{q} \sum_{a \in \F_q} \psi(-at) \cdot \sum_{u \in \F_Q} \psi(a \tr(u^e)).
\end{align*}
Define a new additive character on $\F_Q$, $\Psi : (\F_Q, +) \to \C^\times$ by $\Psi(v) = \psi(\tr(v))$. Since $\psi$ is nontrivial, and the trace map is surjective, it follows that $\Psi$ is nontrivial too. Moreover, since the trace map is $\F_q$-linear, we have $\psi(a \tr(u^e)) = \Psi(a u^e)$. The term corresponding to $a = 0$ contributes $Q/q = q^{d-1}$ to the sum, hence 
$$
R_t - q^{d-1 } = \frac{1}{q} \sum_{a \in \F_q^{\times}} \psi(-at) S(a), \quad \trm{ where } S(a) := \sum_{u \in \F_Q} \Psi(au^e).
$$
Now observe that, for $a \neq 0$
\begin{align*}
S(a) &= 1 + \sum_{u \in \F_Q^{\times}} \Psi( a u^e) \\
&= 1+ \sum_{v \neq 0} \sum_{\substack{u \neq 0,\\ u^e = v}} \Psi(av) \\
&= 1 + \sum_{v \neq 0} \# \Set{u \in \F_Q : u^e = v} \Psi(av) \\
&= 1 + \sum_{v \in \F_Q^{\times}} \left( \sum_{j = 0}^{e-1} \chi(v)^j \right) \Psi(av) \\
&= 1 + \sum_{v \in \F_Q^{\times}} \Psi(av) + \sum_{j = 1}^{e-1} \left( \sum_{v \in \F_Q^{\times}} \chi(v)^j \right)	 \cdot \Psi(av)
\end{align*}
where the  second equality follows by noting that if $v$ is not $e$-th power the weight is $0$, and the fourth equality follows from Lemma \ref{mulchar}.

Since $a \neq 0$ and $\Psi$ is nontrivial, $\sum_{v \in \F_Q} \Psi(av) = 0$, so that $\sum_{v \in \F_Q^{\times}} \Psi(av) = -1$. Hence $S(a) = \sum_{j =1}^{e-1} G_j(a)$, where $G_j(a) = \sum_{v \in \F_Q^{\times}} \chi(v)^j \Psi(av)$. Thus each $G_j(a)$ is a Gauss sum involving the nontrivial multiplicative character $\chi^j$ and the nontrivial additive character $v \mapsto \Psi(av)$. By Lemma \ref{gauss} and the triangle inequality, we have
$$
\abs{S(a)} \leq \sum_{j = 1}^{e-1} \abs{G_j(a)} = (e-1) \sqrt{Q}
$$
so that 
$$
\abs{R_t - q^{d-1}} \leq \frac{1}{q} \sum_{a \in \F_q^{\times}} \abs{S(a)}\leq \frac{q-1}{q} \cdot (e-1) \sqrt{Q} \leq (e-1)q^{d/2}.
$$
\end{proof}

\begin{lem}\label{mainlemma}
Let $k \geq 1$ and $d \geq 2$ and suppose that $q^{d/2 -1} > 3k$. Then, for every $t \in \F_q$, there is a $k$th power $\alpha \in \F_Q$ satisfying 
$$
[\F_q(\alpha) : \F_q] = d, \qquad \tr_{\F_Q/\F_q}(\al) = t.
$$
\end{lem}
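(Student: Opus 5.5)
The plan is to count elements of the form $\alpha = u^e$, $u \in \F_Q$, with $\tr_{\F_Q/\F_q}(u^e) = t$, and show that some of them generate $\F_Q$ over $\F_q$. First, the $e$th powers in $\F_Q$ coincide with the $k$th powers, where $e = \gcd(k,Q-1)$. Indeed, $u^k = (u^{k/e})^e$. Conversely, write $e = ak + b(Q-1)$; then for $u \neq 0$ we get $u^e = (u^a)^k$ (interpreting $u^a$ in $\F_Q^\times$ when $a<0$), and $0 = 0^k$. So it suffices to find $u \in \F_Q$ with $\tr(u^e) = t$ and $\F_q(u^e) = \F_Q$.

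By Proposition \ref{rtlemma}, the number of $u$ with $\tr(u^e)=t$ is
$$
R_t \geq q^{d-1} - (e-1)q^{d/2}.
$$
Next we bound the bad $u$, namely those for which $u^e$ lies in a proper subfield $\F_{q^{d'}}$ with $d' \mid d$, $d' < d$. Every proper subfield lies in $\F_{q^{d/p}}$ for some prime $p \mid d$. Each element of such a subfield has at most $e$ preimages under $u \mapsto u^e$. Hence the number of bad $u$ is at most
$$
e \sum_{d' \mid d,\, d'<d} q^{d'} \leq e \sum_{j=1}^{\lfloor d/2\rfloor} q^j \leq 2e\, q^{d/2},
$$
using a geometric series with ratio $q \geq 2$. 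We do not need the trace condition on the bad set; we simply discard all of it.

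It then suffices to have $q^{d-1} > (e-1)q^{d/2} + 2e q^{d/2}$, which follows from $q^{d-1} \geq 3e\, q^{d/2}$, i.e.\ $q^{d/2-1} \geq 3e$. This holds because $e \leq k$ and $q^{d/2-1} > 3k$ by hypothesis. So some $u$ survives, and $\alpha = u^e$ is a $k$th power with trace $t$ and degree exactly $d$.

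The main obstacle is the subfield count, specifically keeping the constant at most $3$. The bound $\sum_{j \le d/2} q^j \le \frac{q}{q-1} q^{d/2} \le 2q^{d/2}$ gives exactly $(e-1)+2e < 3e$, so the margin is tight but adequate. I will double-check the case $d=2$ or $d=3$, where the only proper subfield is $\F_q$ and the bound is even easier ($e q \le e q^{d/2}$). I will also check the $u=0$ edge case: $0$ lies in $\F_q$, so it is automatically counted as bad.
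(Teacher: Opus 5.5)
Your proposal is correct and follows the paper's argument: you lower-bound $R_t$ via Proposition \ref{rtlemma}, discard at most $2e\,q^{d/2}$ values of $u$ with $u^e$ in a proper subfield, and conclude from $q^{d/2-1} > 3k \geq 3e > 3e-1$. The only addition is that you justify via B\'ezout that $e$th powers and $k$th powers coincide, which the paper simply asserts.
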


\begin{proof}
The $e$th powers and $k$th powers in $\F_Q$ coincide, including zero. Consider
$$
R_t = \# \Set{u \in \F_Q : \tr(u^e) = t}.
$$
By Proposition \ref{rtlemma}, $\abs{R_t  - q^{d-1}} \leq (e-1)q^{d/2}$. We want to discard $u$ for which $u^e$ lies in a proper subfield of $\F_Q$. The union of these subfields has size at most 
$$
\sum_{\substack{r \mid d,\\ r < d}} q^r \leq \sum_{r = 1}^{\lfloor d/2 \rfloor} q^r \leq 2 q^{d/2}.
$$
Each element has at most $e$ preimages under the map $u \mapsto u^e$. Therefore, the number of suitable $u$ is at least 
$$
q^{d-1} -(e-1)q^{d/2} - 2 e q^{d/2} = q^{d/2}( q^{d/2-1} - (3e-1)).
$$
By hypothesis $q^{d/2-1} > 3k$, and since $3k - (3e-1) = 3(k-e) + 1 > 0$, the above expression is positive.
\end{proof}

\section{Main Theorem}\label{mainthmsec}
We now prove Theorem \ref{mainthm}, the main result  of this article. 
\begin{proof}
First suppose that $n \geq 3$ and $q^n > D_k$.  Since $n \geq 3$, 
$$
q^{\frac{n}{2} - 1 } = (q^n)^{\frac{n-2}{2n}} \geq (q^n)^{(1/6)} > (4k)^{4/3} > 3k.
$$
Let $z \in M_n(\F_q)$ be nonscalar, and put $m = n-1$. Let $u \in \F_{q^m}^{\times}$ be a primitive element and set $\beta = u^k$. Then $\beta$ has degree $m$ over $\F_q$. Indeed, if $\beta \in \F_{q^r}$ for a proper divisor $r \mid m$, then $q^{m} -1 \mid k(q^r-1)$. But $r \leq m/2$ and from $q^{n/2 -1} > 3k$ it follows that $q^{m/2}>k$ so that $k(q^r-1) \leq k(q^{m/2} - 1) < q^m-1$, a contradiction.

Let $b = \tr_{\F_{q^m}/\F_q}(\beta)$. Applying Lemma \ref{mainlemma}, with $d = n$ and $t = \tr(z)-b$, we obtain an $\al \in \F_{q^n}$ of degree $n$, which is a $k$th power with trace $t$. For a finite extension of $E/\F_q$, fix an $\F_q$-basis and let $M_\gamma$ denote the matrix of the multiplication operator $x \mapsto \gam x$ for $\gamma \in E$. Let $X$ be the conjugacy class of $M_{\alpha}$ and $Y$ be the conjugacy class of $\begin{pmatrix} 0 & 0 \\ 0 & M_{\beta} \end{pmatrix}$. These classes have exactly the required properties:
\begin{enumerate}
  \item
  $X$ has irreducible characteristic polynomial of degree $n$.
  \item
  $Y$ has characteristic polynomial $T h(T)$ where $h(T)$ is irreducible of degree $n-1$ and (certainly) $h(0) \neq 0$. 
  \item
  $\tr(X) + \tr(Y) = \tr(z)$. 
  \item
  Every element of $X$ and $Y$ is a $k$th power, since $M_{v^k} = M_{v}^k$ and conjugation preserves a matrix being a $k$th power. 
\end{enumerate}
Now, if $q>2$ we can apply Theorem \ref{x+ythm} directly to conclude that $z = x + y$, where $x \in X$ and $y \in Y$. If $q = 2$, then $2^n > D_k = (4k)^8 \geq 2^{16}$ so that $n \geq 17$. In particular $n \geq 5$ and the hypothesis of Theorem \ref{x+ythm} holds again. Thus in every case $z = x + y$ with $ x \in X$ and $y \in Y$.

Scalar matrices are handled separately. If $z = a I_n$ for $a \in \F_q$, consider the embedding 
$
\F_{q^n} \hookrightarrow M_n(\F_q)
$
by multiplication matrices $\alpha \mapsto M_{\alpha}$. By \cite[Theorem $2.8$]{KVZ}, every nonzero $a$ is a sum of two $k$th powers, when $q^n > (k-1)^4$. This proves the result when $q^n > D_k$ for $n \geq 3$. 

Now we treat the case $n = 2$. Recall that $D_k = (4k)^8$ and assume that $q^n > D_k$. When $n = 2$, this becomes $q^2 > (4k)^8$ so that $q > (4k)^4$, and since $(4k)^4 > (k-1)^4 + 6k$, it follows \cite[Theorem $1.4$]{KVZ} that for every $Z \in M_2(\F_q)$, there exist $A, B \in M_2(\F_q)$ such that $Z = A^k + B^k$. Notice that we treated $n = 2$ separately, because the condition $q^{n/2 - 1} > 3k$ does not hold for $n = 2$, so that we cannot apply Lemma \ref{mainlemma}.

For the remaining case $n = 1$, the hypothesis is $q > D_k = (4k)^8$. Clearly $D_k$ is strictly greater than $(k-1)^4$, and therefore Theorem $2.8$ \cite{KVZ} applies.

Thus, if $q^n > (4k)^8$ then $M_n(\F_q) = \Set{A^k +B^k : A , B \in M_n(\F_q)}$. The passage to the $q^{n^2} > C_k$ result is straightforward. Indeed, if $q^n \leq D_k$ then since $q \geq 2$ it follows that $n \leq \log_2 D_k \leq L_k$ and so $q^{n^2} = (q^n)^n \leq D_k^{L_k} = C_k$. Therefore if $q^{n^2} > C_k$ then $q^n > D_k$, and the result follows. 
\end{proof}

\section*{Acknowledgements}
It is a great pleasure to thank Michael Larsen for suggesting the problem and for many useful discussions.

\bibliographystyle{elsarticle-num} 

\end{document}